\theoremstyle{plain}
\newtheorem{thm}{Theorem}[section]
\newtheorem{theorem}[thm]{Theorem}
\newtheorem{mthm}[thm]{Main Theorem}
\newtheorem{cor}[thm]{Corollary}
\newtheorem{lemma}[thm]{Lemma}
\newtheorem{proposition}[thm]{Proposition}
\theoremstyle{definition}
\newtheorem{de}[thm]{Definition}
\newtheorem{rem}[thm]{Remark}
\newtheorem{remark}[thm]{Remark}
\newtheorem{convention}[thm]{Convention}
\newtheorem{example}[thm]{Example}
\newcommand{\Z}{\mathbb{Z}}
\newcommand{\N}{\mathbb{N}}
\newcommand{\aut}[1]{\mathrm{Aut}(#1)}
\numberwithin{equation}{section}
\begin{document}
\title{Indecomposable involutive solutions of the Yang-Baxter equation of multipermutational level 2 with abelian permutation group}

\author{P\v remysl Jedli\v cka}
\author{Agata Pilitowska}
\author{Anna Zamojska-Dzienio}

\address{(P.J.) Department of Mathematics, Faculty of Engineering, Czech University of Life Sciences, Kam\'yck\'a 129, 16521 Praha 6, Czech Republic}
\address{(A.P., A.Z.) Faculty of Mathematics and Information Science, Warsaw University of Technology, Koszykowa 75, 00-662 Warsaw, Poland}

\email{(P.J.) jedlickap@tf.czu.cz}
\email{(A.P.) A.Pilitowska@mini.pw.edu.pl}
\email{(A.Z.) A.Zamojska@mini.pw.edu.pl}

\keywords{Yang-Baxter equation, set-theoretic solution, multipermutation solution, transitive group.}
\subjclass[2010]{Primary: 16T25. Secondary: 20B35.}

\date{\today}

\begin{abstract}
We present a construction of all finite indecomposable involutive solutions of the Yang-Baxter equation of multipermutational level at most 2 with abelian permutation group. As a consequence, we obtain a formula for the number of such solutions with a fixed number of elements. We also describe some properties of the automorphism groups in this case - in particular, we show they are regular abelian groups.
\end{abstract}

\maketitle
\section{Introduction}\label{sec:intr}

The Yang-Baxter equation is a fundamental equation occurring in integrable models in statistical mechanics and quantum field theory~\cite{Jimbo}.
Let $V$ be a vector space. A {\em solution of the Yang--Baxter equation} is a linear mapping $r:V\otimes V\to V\otimes V$ such that
\[
(id\otimes r) (r\otimes id) (id\otimes r)=(r\otimes id) (id\otimes r) (r\otimes id).
\]
Description of all possible solutions seems to be extremely difficult and therefore
there were some simplifications introduced (see e.g. \cite{Dr90}).

Let $X$ be a basis of the space $V$ and let $\sigma:X^2\to X$ and $\tau: X^2\to X$ be two mappings.
We say that $(X,\sigma,\tau)$ is a {\em set-theoretic solution of the Yang--Baxter equation} if
the mapping $x\otimes y \mapsto \sigma(x,y)\otimes \tau(x,y)$ extends to a solution of the Yang--Baxter
equation. It means that $r\colon X^2\to X^2$, where $r=(\sigma,\tau)$ satisfies the \emph{braid relation}:

\begin{equation}\label{eq:braid}
(id\times r)(r\times id)(id\times r)=(r\times id)(id\times r)(r\times id).
\end{equation}

A solution is called {\em non-degenerate} if the mappings $\sigma_x:=\sigma(x,\_)$ and $\tau_y:=\tau(\_\,,y)$ are bijections,
for all $x,y\in X$.
A solution $(X,\sigma,\tau)$ is {\em involutive} if $r^2=\mathrm{id}_{X^2}$. In the involutive case, the operation $\tau$ can be expressed by means of the operation $\sigma$ (see Theorem \ref{th:rclq}).
\begin{convention}
All solutions, we study in this paper, are set-theoretic, non-degenerate and involutive so we will call them simply \emph{solutions}.
\end{convention}

\emph{The permutation group} $\mathcal{G}(X)=\left\langle \sigma_x\colon x\in X\right\rangle$ of a solution $(X,\sigma,\tau)$ is the subgroup of the symmetric group $S(X)$ generated by mappings $\sigma_x$, with $x\in X$. The group $\mathcal{G}(X)$ is also called \emph{the involutive Yang-Baxter group} (IYB group) associated to the solution $(X,\sigma,\tau)$. A solution $(X,\sigma,\tau)$ is \emph{indecomposable} if the permutation group  $\mathcal{G}(X)$ acts transitively on $X$.

In \cite[Section 3.2]{ESS} Etingof, Schedler and Soloviev introduced, for each solution $(X,\sigma,\tau)$, the equivalence relation $\sim$ on the set $X$: for each $x,y\in X$

\[
x\sim y\quad \Leftrightarrow\quad \sigma_x=\sigma_y.
\]
They showed that the quotient set $X/\mathord{\sim}$ can be again endowed
with a structure of a solution. They call such a solution the {\em retraction} of the solution~$(X,\sigma,\tau)$ and denote it by
$\mathrm{Ret}(X)$. One can also define \emph{iterated retraction} in the following way: ${\rm Ret}^0(X,\sigma,\tau):=(X,\sigma,\tau)$ and
${\rm Ret}^k(X,\sigma,\tau):={\rm Ret}({\rm Ret}^{k-1}(X,\sigma,\tau))$, for any natural number $k>1$.
A solution $(X,\sigma,\tau)$ is called a \emph{multipermutation solution of level $m$} if $m$ is the least nonnegative integer such that
$|{\rm Ret}^{m}(X,\sigma,\tau)|=1.$ In such case, we will also say that a solution is \emph{of a multipermutational level $m$}.

Special properties of multipermutation solutions and their impact on the associated algebraic objects were investigated by various authors. The particular case of multipermutation solutions of level $2$ was first studied by Gateva-Ivanova and Majid in \cite{GIM07,GIM11} and then continued by Gateva-Ivanova in \cite{GI18}. In \cite[Section 3.3]{ESS} some examples of enumeration of indecomposable multipermutation solutions of level $2$ for small sets were given. The authors based them on classification of transitive actions of a free abelian group on sets $\mathbb{Z}_n=\mathbb{Z}/n\mathbb{Z}$. The significance of such solutions comes from the fact that each multipermutation solution of level $2$ is a \emph{generalized twisted union} of indecomposable multipermutation solutions of level $1$ or $2$ (see \cite[Section 3.4]{ESS}). In general, the indecomposable solutions play a role of bricks in constructing other solutions so recently this topic is of increasing interest (e.g. \cite[Problem 4]{Vendr}, \cite{CCP,CPR,CJO,Rump}). The main aim of our paper is to present a direct construction of finite indecomposable solutions of multipermutational level at most $2$ with abelian permutation group. This is natural restriction since Ced\'o, Jespers and Okni\'nski in \cite[Theorem 6.5]{CJO14} showed that each finite solution with an abelian permutation group is a multipermutation solution. Moreover, in \cite[Theorem 7.11]{JPZ19} we showed that each abelian group is a permutation group for a multipermutation solution of level $2$. Nevertheless, we provide an example of an indecomposable solution of multipermutational level $2$ with non-abelian permutation group (Example~\ref{ex:non-abelian}). An example of an indecomposable solution with cyclic permutation group of multipermutational level $3$ has been given recently in \cite[Example 3]{CPR}.

Here we summarize previous work on indecomposable multipermutation solutions. In \cite{ESS}, with the use of a computer program, the number of all indecomposable solutions with at most $8$ elements were found. It was also shown that (up to isomorphism) there is only one indecomposable solution with a prime number of elements (this is a cyclic permutation solutions, i.e. a multipermutation solution of level $1$ with $\sigma_x$ being a cycle). This result was recently used in \cite{CJO} for description of all finite \emph{primitive} solutions (solutions with primitive permutation group). Obviously, they are indecomposable and, by \cite[Theorem 3.1]{CJO}, they have a prime number of elements. In general, a permutation solution is indecomposable if and only if it is cyclic. In \cite{CPR} a complete classification of the indecomposable solutions of cardinality $pq$ (with $p$, $q$ not necessarily distinct prime numbers) and an abelian permutation group was given. They are of multipermutational level at most $2$. We refer to these results in Example \ref{exm:1}.

The paper is organized as follows. In Section \ref{sec:Pre} we recall basic notions and results. Section \ref{sec2} contains a construction of
solutions. In Section \ref{sec2b} we find the
main result -- the representation theorem for indecomposable solutions of multipermutational level at most $2$ \emph{of any finite cardinality} with an abelian permutation group (Main Theorem \ref{cor:sol}). This allows us to obtain a complete formula for the number of solutions in this class (\ref{thm:7}). We study also the special case when the permutation group is cyclic. In Section \ref{ssec3} we give some results on the automorphism groups for finite indecomposable solutions of multipermutational level at most~$2$ with abelian permutation group. We finish the paper with a discussion on the infinite case in Section \ref{ssec:4}.

\section{Preliminaries}\label{sec:Pre}

In \cite{Rump05} Rump introduced \emph{cycle-sets} and showed that there is a one-to-one correspondence between solutions of the Yang-Baxter equation and non-degenerate cycle sets.
\begin{de}
An algebra $(X,\alpha)$ with $\alpha\colon X^2\to X$ is a \emph{cycle-set} if the mappings $\alpha_x:=\alpha(x,\_)$, for $x\in X$,  are bijections and the inverse mappings $\alpha^{-1}_x$ satisfy the following condition for every $a,b\in X$,
\begin{equation}\label{eq:cc}
\alpha^{-1}_{\alpha^{-1}_a(b)}\alpha^{-1}_a=\alpha^{-1}_{\alpha^{-1}_b(a)}\alpha^{-1}_b.
\end{equation}
It is \emph{non-degenerate}, if the mapping
\begin{equation}\label{eq:ND}
 T\colon X\to X;\quad  a\mapsto \alpha^{-1}_a(a),
  \end{equation}
  is a bijection.
\end{de}

\begin{theorem} \cite[Proposition 1]{Rump05}\label{th:rclq}
$(X,\sigma,\tau)$ is a solution of the Yang-Baxter equation if and only if $(X,\sigma)$ is a non-degenerate cycle set. The operation $\tau$ is defined then by
\begin{align}\label{eq:ccsigma}
\tau(x,y)=\tau_y(x)=\sigma^{-1}_{\sigma_x(y)}(x),
\end{align}
for $x,y\in X$.
\end{theorem}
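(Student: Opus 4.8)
The plan is to take the cycle-set operation to be $\sigma$ itself and to split the statement into the two mutually inverse passages $(X,\sigma,\tau)\mapsto(X,\sigma)$ and $(X,\sigma)\mapsto(X,\sigma,\tau)$, matching up involutivity with the formula \eqref{eq:ccsigma}, the braid relation \eqref{eq:braid} with the cycle identity \eqref{eq:cc}, and the two non-degeneracy conditions. First I would extract the formula from involutivity. Writing $r(x,y)=(\sigma_x(y),\tau_y(x))$ and expanding $r^2=\id_{X^2}$, the equality $r^2(x,y)=(x,y)$ splits into $\sigma_{\sigma_x(y)}(\tau_y(x))=x$ and $\tau_{\tau_y(x)}(\sigma_x(y))=y$. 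The first is exactly $\tau_y(x)=\sigma^{-1}_{\sigma_x(y)}(x)$, and a direct substitution into the second shows that it then holds automatically. Hence, once all $\sigma_x$ are bijective, being involutive is equivalent to $\tau$ being given by \eqref{eq:ccsigma}; this proves the displayed formula and lets me eliminate $\tau$ from every later computation.

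Next I would expand the braid relation on $X^3$. Comparing the three coordinates of the two sides at $(x,y,z)$ gives three identities; the first coordinate reads
\[ \sigma_x\sigma_y=\sigma_{\sigma_x(y)}\sigma_{\tau_y(x)} \qquad (\star), \]
the third gives a mirror identity for $\tau$, and the middle one a mixed identity. The core computation is that $(\star)$, after substituting \eqref{eq:ccsigma} for $\tau$, is exactly \eqref{eq:cc}. Indeed, set $a:=\sigma_x(y)$ and $b:=x$; then $y=\sigma_b^{-1}(a)$ and $\tau_y(x)=\sigma_a^{-1}(b)$, so $(\star)$ becomes $\sigma_b\,\sigma_{\sigma_b^{-1}(a)}=\sigma_a\,\sigma_{\sigma_a^{-1}(b)}$, and taking inverses of both sides turns this into \eqref{eq:cc} with $\alpha=\sigma$. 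Since each $\sigma_x$ is a bijection, as $(x,y)$ runs over $X^2$ the pair $(a,b)$ runs over all of $X^2$, so $(\star)$ for all inputs is equivalent to \eqref{eq:cc} for all $a,b$. This already yields the direction ``solution $\Rightarrow$ cycle set''.

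The remaining, and I expect hardest, step is to match the two non-degeneracy conditions: all $\tau_y$ bijective versus bijectivity of the map $T$ from \eqref{eq:ND}. This is genuine content, not a formality: taking $\sigma_1=\id$, $\sigma_2=(1\,2)$, $\sigma_3=(1\,3)$ on a three-element set gives bijective $\sigma_x$ and an involutive $r$ for which every $\tau_y$ and also $T$ are constant, so bijectivity of the $\sigma_x$ alone is far from sufficient. Assuming \eqref{eq:cc}, I would prove the equivalence as a separate lemma. Keeping track of which maps are bijective, one can factor $r=H\circ F$ with $F(x,y)=(\sigma_x(y),x)$ and $H(u,x)=(u,\sigma_u^{-1}(x))$, both bijections; the difficulty is that this does not isolate a single slice $x\mapsto\tau_y(x)$. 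I would instead use \eqref{eq:cc} to force each $\tau_y$ to be injective exactly when $T$ is injective, and conversely reconstruct $T$ from the family $\{\tau_y\}$. Carrying this linkage through is the main obstacle.

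Finally I would assemble the converse. Starting from a non-degenerate cycle set, \eqref{eq:cc} gives $(\star)$ by reversing the reparametrization above, and the non-degeneracy lemma gives that all $\tau_y$ are bijections, so $(X,\sigma,\tau)$ is non-degenerate and, by the first paragraph, involutive. It then remains to recover the full braid relation from $(\star)$. Here I would rely on the standard reduction (going back to Rump) that, for involutive non-degenerate solutions, the braid relation is equivalent to the single first-coordinate identity $(\star)$: once $(\star)$, the formula \eqref{eq:ccsigma}, and bijectivity of all $\tau_y$ are in force, the mirror identity follows by the symmetry $\sigma\leftrightarrow\tau$ and the mixed identity by a short computation. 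Verifying this reduction cleanly is the second delicate point; with it in hand the two passages are mutually inverse and the theorem follows.
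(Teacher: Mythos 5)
You should first note that the paper contains no proof of this theorem at all: it is quoted verbatim from \cite[Proposition 1]{Rump05}, so your proposal can only be measured against Rump's argument, whose standard route your outline indeed follows. The parts you actually carry out are correct: expanding $r^2=\id_{X^2}$ gives $\sigma_{\sigma_x(y)}(\tau_y(x))=x$, which is \eqref{eq:ccsigma}, and the second component $\tau_{\tau_y(x)}(\sigma_x(y))=y$ is then automatic; likewise, under the reparametrization $(a,b)=(\sigma_x(y),x)$ the first-coordinate braid identity $\sigma_x\sigma_y=\sigma_{\sigma_x(y)}\sigma_{\tau_y(x)}$ $(\star)$ is genuinely equivalent to \eqref{eq:cc}, since $(x,y)\mapsto(\sigma_x(y),x)$ is a bijection of $X^2$. (A minor slip: in your three-element illustration not every $\tau_y$ is constant --- there $\tau_2(1)=\tau_2(2)=2$ but $\tau_2(3)=3$ --- although $\tau_2$ is still non-bijective, which is all your point requires.)

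However, as submitted the proposal establishes only the routine half of the statement and defers exactly the two steps that carry the content of Rump's proposition. First, the equivalence, in the presence of \eqref{eq:cc}, between bijectivity of all $\tau_y$ and bijectivity of the map $T$ of \eqref{eq:ND} is merely announced as ``the main obstacle'': you propose no mechanism for transferring injectivity or surjectivity in either direction, and your factorization $r=H\circ F$ shows only that $r$ itself is bijective, which, as you observe, says nothing about the slices $\tau_y$. That this step is not a formality is underscored by the companion fact quoted in the paper --- that every \emph{finite} cycle set is automatically non-degenerate (Rump's Theorem 2; cf.\ \cite[Proposition 4.7]{JPZ}) --- which needs its own nontrivial proof. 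Second, your recovery of the full braid relation from $(\star)$ rests on ``the standard reduction (going back to Rump)''; but that reduction is precisely a component of the proposition you are asked to prove, so the appeal is circular. You must verify the middle- and third-coordinate identities directly: the third can be obtained from $(\star)$ via the flip symmetry $r\mapsto JrJ$, $J(x,y)=(y,x)$, which exchanges the roles of $\sigma$ and $\tau$, but this presupposes that the $\tau_y$ are already known to be bijective (so that the involutive formula can be inverted to express $\sigma$ through $\tau$), forcing the non-degeneracy lemma to come first; the middle identity then still requires a short explicit computation. Until these two lemmas are written out, what you have is a correct plan of the standard proof, not a proof.
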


Rump also showed in \cite[Theorem 2]{Rump05} that each finite cycle-set is non-degenerate (for a short direct proof see \cite[Proposition 4.7]{JPZ}).
\vskip 5mm

By results of Gateva-Ivanova, multipermutation solutions of level $2$ can be characterized in an easy way.
\begin{theorem}\cite[Proposition 4.7]{GI18}\label{GI:mper}
Let $(X,\sigma,\tau)$ be a solution and $|X|\geq 2$. Then
$(X,\sigma,\tau)$ is a multipermutation solution of level $2$ if and only if the following condition holds for every $x,y,z\in X$:
\begin{align}\label{eq:2per}
\sigma_{\sigma_y(x)}=\sigma_{\sigma_z(x)}.
\end{align}

\end{theorem}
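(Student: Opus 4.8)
The plan is to translate ``multipermutation level $2$'' into a statement about the \emph{second} retraction and then read off \eqref{eq:2per} directly. Recall from \cite[Section 3.2]{ESS} that $\mathrm{Ret}(X,\sigma,\tau)$ has underlying set $X/\mathord{\sim}$ and that its structure map is induced by $\bar\sigma_{[x]}([y])=[\sigma_x(y)]$, where $[x]$ denotes the $\sim$-class of $x$. The one point I would settle at the outset is that this formula is well defined: it does not depend on the representative of $[x]$ because $x\sim x'$ forces $\sigma_x=\sigma_{x'}$, and it does not depend on the representative of $[y]$ because $y\sim y'$ implies $\sigma_x(y)\sim\sigma_x(y')$. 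The latter implication is exactly where the cycle-set law \eqref{eq:cc} (via the bijectivity of the maps $\sigma_x$) enters, and it is precisely the content of the ESS claim that the quotient is again a solution; I would cite this rather than reprove it, since it is background.

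Next I would unwind the definition of the level. As $|X|\geq 2$, the solution is not of level $0$, so being of multipermutation level at most $2$ is equivalent to $|\mathrm{Ret}^2(X,\sigma,\tau)|=1$, equivalently to $\mathrm{Ret}(X,\sigma,\tau)$ being of level at most $1$. A solution is of level at most $1$ exactly when all of its left translations coincide, so the condition becomes: in $\mathrm{Ret}(X,\sigma,\tau)$ one has $\bar\sigma_{[a]}=\bar\sigma_{[b]}$ for all classes $[a],[b]$.

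The heart of the argument is then just a matching of quantifiers. Using the formula for $\bar\sigma$, the equality $\bar\sigma_{[a]}=\bar\sigma_{[b]}$ for all $[a],[b]$ says that $[\sigma_a(c)]=[\sigma_b(c)]$ for all $a,b,c\in X$, which by the definition of $\sim$ is the same as $\sigma_{\sigma_a(c)}=\sigma_{\sigma_b(c)}$ for all $a,b,c$. Relabelling $(a,b,c)$ as $(y,z,x)$ turns this into \eqref{eq:2per}, and since every link in this chain is an equivalence, the two conditions match in both directions. I would also record the boundary bookkeeping: \eqref{eq:2per} by itself expresses ``level at most $2$'' (it is satisfied, trivially, also by a level-$1$ solution, in which all $\sigma_x$ agree), so to land on level exactly $2$ one additionally excludes the degenerate case that all $\sigma_x$ coincide; under the paper's standing interest in level at most $2$ this causes no trouble.

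The only genuinely technical step is the well-definedness in the first paragraph --- concretely, deducing $\sigma_x(y)\sim\sigma_x(y')$ from $y\sim y'$, which amounts to rewriting $\sigma_{\sigma_x(y)}$ by means of \eqref{eq:cc} and then using that $\sigma_x$ is onto to upgrade ``agreement on the image of $\sigma_x$'' to ``agreement on all of $X$''. Everything after that is the bookkeeping of iterated retraction against the single displayed identity, which I expect to be routine.
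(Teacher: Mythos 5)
Your proposal is correct, but there is nothing in the paper to compare it against: the paper states this result purely as a quotation of \cite[Proposition 4.7]{GI18} and gives no proof of its own, so your argument stands or falls on its own merits. It stands. The chain ``$|X|\geq 2$, hence level at most $2$ iff $|\mathrm{Ret}^2(X,\sigma,\tau)|=1$ iff $\mathrm{Ret}(X,\sigma,\tau)$ has level at most $1$ iff all $\bar\sigma_{[a]}$ coincide'' is a genuine chain of equivalences, and unwinding $\bar\sigma_{[x]}([y])=[\sigma_x(y)]$ against the definition of $\sim$ lands exactly on \eqref{eq:2per} after relabelling; delegating the one nontrivial ingredient --- that $y\sim y'$ implies $\sigma_x(y)\sim\sigma_x(y')$, so that the retraction is well defined --- to \cite{ESS} is appropriate for a background statement of this kind. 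One caution if you ever make that step self-contained: the paper's \eqref{eq:cc} is phrased for the \emph{inverse} maps $\sigma_a^{-1}$, so the cancellation you sketch most directly gives $\sigma_x^{-1}(y)\sim\sigma_x^{-1}(y')$ from $y\sim y'$ (via $\sigma^{-1}_{\sigma^{-1}_x(y)}\sigma^{-1}_x=\sigma^{-1}_{\sigma^{-1}_y(x)}\sigma^{-1}_y$ and right-cancelling the bijection $\sigma_x^{-1}$); passing from compatibility with $\sigma_x^{-1}$ to compatibility with $\sigma_x$ needs an extra word (immediate for finite $X$, where an injective map of $\sim$-classes to $\sim$-classes must permute them), which is a reason the citation to \cite{ESS} is the cleaner choice. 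Finally, your boundary remark is exactly right and worth keeping: \eqref{eq:2per} characterizes level \emph{at most} $2$ --- any permutation solution with $|X|\geq 2$ satisfies it while having level $1$ --- so the theorem's literal phrase ``of level $2$'' must be read as ``of level at most $2$,'' which is in fact how the paper applies it, e.g.\ at the end of the proof of Theorem~\ref{thm:1}.
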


\begin{de}
A solution $(X,\sigma,\tau)$ is called \emph{$2$-reductive}, if for every $x,y\in X$
\begin{align}\label{eq:red}
\sigma_{\sigma_x(y)}=\sigma_y.
\end{align}
\end{de}

It was shown in~\cite{JPZ19} that $2$-reductive solutions can be used as ground stones for constructing multipermutation solutions of level~$2$.

\begin{theorem}\cite[Theorem 4.11]{JPZ19}\label{th:pi-isotope}
Let $(X,\sigma,\tau)$ be a $2$-reductive solution and let~$\pi$ be a permutation of~$X$ that
satisfies
\[\sigma_{\pi(y)}\pi\sigma_x=\sigma_{\pi(x)}\pi\sigma_y,\]
for all $x,y\in X$.
Then $(X,\sigma',\tau')$, where $\sigma'_x=\sigma_x \pi$ and $\tau'_y=\pi^{-1}\tau_{\pi(y)}$ is a solution of multipermutational level at most~$2$.
\end{theorem}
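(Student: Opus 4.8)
The plan is to invoke Rump's correspondence (Theorem~\ref{th:rclq}): it suffices to show that $(X,\sigma')$ is a non-degenerate cycle set, to check that its multipermutational level is at most~$2$ via Theorem~\ref{GI:mper}, and to observe that the $\tau'$ prescribed in the statement is exactly the one that \eqref{eq:ccsigma} associates to $\sigma'$. The last point is immediate: since $\sigma'_x(y)=\sigma_x(\pi(y))$, formula \eqref{eq:ccsigma} for $\sigma$ gives $\tau_{\pi(y)}(x)=\sigma^{-1}_{\sigma_x(\pi(y))}(x)=\sigma^{-1}_{\sigma'_x(y)}(x)$, so that $\pi^{-1}\tau_{\pi(y)}(x)=\pi^{-1}\sigma^{-1}_{\sigma'_x(y)}(x)=(\sigma')^{-1}_{\sigma'_x(y)}(x)$, which is precisely $\tau'$. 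Non-degeneracy is equally short: the map $T'$ of \eqref{eq:ND} for $\sigma'$ satisfies $T'(a)=(\sigma')^{-1}_a(a)=\pi^{-1}\sigma^{-1}_a(a)=\pi^{-1}(T(a))$, a composite of two bijections, hence a bijection.

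Before the main computation I would record two consequences of $2$-reductivity \eqref{eq:red}. First, $\sigma_{\sigma_x(y)}=\sigma_y$ forces $\sigma_z=\sigma_{h(z)}$ for every $h\in\mathcal{G}(X)$, i.e.\ the map $z\mapsto\sigma_z$ is constant on $\mathcal{G}(X)$-orbits. Second, feeding this into the cycle-set identity \eqref{eq:cc} (which is a statement about $\sigma^{-1}$) collapses both indices, $\sigma^{-1}_{\sigma^{-1}_a(b)}=\sigma^{-1}_b$ and $\sigma^{-1}_{\sigma^{-1}_b(a)}=\sigma^{-1}_a$, so \eqref{eq:cc} reduces to $\sigma^{-1}_b\sigma^{-1}_a=\sigma^{-1}_a\sigma^{-1}_b$; thus all $\sigma_x$ commute and $\mathcal{G}(X)$ is abelian. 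Commutativity of $\mathcal{G}(X)$ is the lever for everything below. The level bound is then easy: using orbit-invariance with $w=\pi(x)$,
\[\sigma'_{\sigma'_y(x)}=\sigma_{\sigma_y(\pi(x))}\,\pi=\sigma_{\pi(x)}\,\pi,\]
which is independent of $y$, so $\sigma'_{\sigma'_y(x)}=\sigma'_{\sigma'_z(x)}$ and Theorem~\ref{GI:mper} gives multipermutational level at most~$2$ (the case $|X|=1$ being trivial).

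The main obstacle is the cycle-set identity \eqref{eq:cc} for $\sigma'$. Since $(\sigma')^{-1}_x=\pi^{-1}\sigma^{-1}_x$, applying \eqref{eq:cc} to $\sigma'$, cancelling the common leading factor $\pi^{-1}$, and inverting turns it into the equivalent statement
\[\sigma_a\,\pi\,\sigma_{(\sigma'_a)^{-1}(b)}=\sigma_b\,\pi\,\sigma_{(\sigma'_b)^{-1}(a)}\]
for all $a,b\in X$. To evaluate the twisted index $u:=(\sigma'_a)^{-1}(b)$, note $\pi(u)=\sigma^{-1}_a(b)$, so by orbit-invariance $\sigma_{\pi(u)}=\sigma_b$; I would then use the hypothesis on $\pi$ in the solved form $\sigma_{\pi(y)}=\sigma_{\pi(a)}\,\pi\,\sigma_y\sigma^{-1}_a\,\pi^{-1}$ (take $y=u$) to express $\sigma_u$, obtaining $\sigma_a\,\pi\,\sigma_u=\sigma_a\sigma_b\,\sigma^{-1}_{\pi(a)}\,\pi\,\sigma_a$ after moving the $\mathcal{G}(X)$-factors past one another by commutativity. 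The symmetric computation produces $\sigma_a\sigma_b\,\sigma^{-1}_{\pi(b)}\,\pi\,\sigma_b$ on the right, so the identity reduces to $\sigma^{-1}_{\pi(a)}\,\pi\,\sigma_a=\sigma^{-1}_{\pi(b)}\,\pi\,\sigma_b$; and this is nothing but the hypothesis $\sigma_{\pi(b)}\pi\sigma_a=\sigma_{\pi(a)}\pi\sigma_b$ rearranged (multiply on the left by $\sigma^{-1}_{\pi(a)}\sigma^{-1}_{\pi(b)}$ and use commutativity), which closes the argument. The delicate points, and where I expect the real work to lie, are keeping careful track of which factors lie in the abelian group $\mathcal{G}(X)$ — and may therefore be commuted freely — versus the factor $\pi$, which may not, and checking that each reduction via \eqref{eq:red} is applied to an argument genuinely lying in a single $\mathcal{G}(X)$-orbit.
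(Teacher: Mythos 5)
Your proof is correct, and it is worth noting at the outset that the paper itself contains no proof of Theorem~\ref{th:pi-isotope}: the result is imported verbatim from \cite[Theorem 4.11]{JPZ19}, so your argument is compared against that external source rather than anything internal. What you give is a self-contained, elementary verification via Rump's correspondence, and every step checks out. The two preparatory facts you rederive --- that $2$-reductivity \eqref{eq:red} makes $x\mapsto\sigma_x$ constant on $\mathcal{G}(X)$-orbits (closure under $\sigma_x^{-1}$ follows by substituting $z=\sigma_x^{-1}(y)$ into \eqref{eq:red}), and that feeding this into \eqref{eq:cc} collapses both indices and forces $\mathcal{G}(X)$ abelian --- are themselves lemmas in \cite{JPZ19}, where the theorem is proved inside a developed theory of $2$-reductive solutions; rederiving them from scratch makes your proof independent of that machinery, which is a genuine gain in a paper that otherwise just cites the result. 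The central computation is sound: with $u=(\sigma'_a)^{-1}(b)$ and $v=(\sigma'_b)^{-1}(a)$, cancelling the leading $\pi^{-1}$ and inverting turns \eqref{eq:cc} for $\sigma'$ into $\sigma_a\pi\sigma_u=\sigma_b\pi\sigma_v$; orbit-invariance gives $\sigma_{\pi(u)}=\sigma_b$ and $\sigma_{\pi(v)}=\sigma_a$, the hypothesis on $\pi$ (with $(x,y)=(a,u)$, resp.\ $(b,v)$) plus commutativity of $\mathcal{G}(X)$ reduce both sides to $\sigma_a\sigma_b\sigma_{\pi(a)}^{-1}\pi\sigma_a$ and $\sigma_a\sigma_b\sigma_{\pi(b)}^{-1}\pi\sigma_b$, and the residual identity $\sigma_{\pi(a)}^{-1}\pi\sigma_a=\sigma_{\pi(b)}^{-1}\pi\sigma_b$ is exactly the hypothesis after left-multiplication by $\sigma_{\pi(a)}^{-1}\sigma_{\pi(b)}^{-1}$. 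The identification $\tau'_y=\pi^{-1}\tau_{\pi(y)}$ with the map \eqref{eq:ccsigma} assigns to $\sigma'$, and the non-degeneracy check $T'=\pi^{-1}\circ T$, are both immediate and correctly handled. One pedantic caveat: Theorem~\ref{GI:mper} as quoted characterizes level \emph{exactly} $2$, whereas condition \eqref{eq:2per} actually holds for all solutions of level at most $2$ (e.g.\ permutation solutions); since the conclusion you need is ``at most $2$,'' your usage is the right one, and it matches how the paper itself invokes Theorem~\ref{GI:mper} in the proof of Theorem~\ref{thm:1}, so this is a defect of the quotation, not of your argument.
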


\begin{example}\label{ex:non-abelian}
 Let $X=\Z_n\times\{0,1\}$, for some $n\in\N$, and let $\sigma_{(a,i)}((b,j))=(b+i,j)$.
 This is a $2$-reductive solution, according to
 \cite[Theorem 3.6]{JPZ19}: it is the sum of a trivial affine mesh $(\Z_n,\Z_n,\left(\begin{smallmatrix} 0 & 0
 \\ 1 & 1 \end{smallmatrix}\right))$.
 
 Let now $\pi$ be a permutation of~$X$ defined by $\pi((a,i))=(-a,1-i)$. We have
 \[
 \sigma_{\pi((a,i))}\pi\sigma_{(b,j)}(c,k)=
 \sigma_{(-a,1-i)}\pi (c+j,k)
 =\sigma_{(-a,1-i)} (-c-j,1-k)=
 (-c-j+1-i,1-k)
 \]
 and this expression is symmetric to the exchange $(a,i)\leftrightarrow(b,j)$.
 According to Theorem~\ref{th:pi-isotope}, the solution $(X,\sigma',\tau')$ is a multipermutation
 solution of level~$2$.
 Since $\sigma_{(a,0)}'=\pi$ and $\sigma_{(a,1)}'=\sigma_{(0,1)}\pi$, for all $a\in\Z_n$,
 the permutation group
 of the solution is generated by $\sigma_{(0,1)}$ and $\pi$ and it is transitive. Moreover, we have $\sigma_{(0,1)}^{-1}=\pi^{-1}\sigma_{(0,1)}\pi$ and therefore 
 the permutation group is isomorphic to $\Z_n\ltimes \Z_2$.
\end{example}

Actually, every solution of multipermutational level~$2$ can be obtained this way from a~2-reductive solution.

\begin{theorem}\cite[Theorem 7.12]{JPZ19} \label{th:2persol}
Let $(X,\sigma,\tau)$ be a multipermutation solution of level~$2$
and $e\in X$. Then $(X,L,\mathbf{R})$, where $L_x=\sigma_x\sigma_e^{-1}$ and $\mathbf{R}_y=\sigma_e\tau_{\sigma_e^{-1}(y)}$, for $x,y\in X$, is a $2$-reductive solution.
\end{theorem}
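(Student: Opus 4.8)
The plan is to pass to cycle sets via Theorem~\ref{th:rclq}: the triple $(X,L,\mathbf{R})$ is a solution precisely when $(X,L)$ is a non-degenerate cycle set and $\mathbf{R}$ is the $\tau$-operation induced by $L$ through \eqref{eq:ccsigma}. Accordingly I would organise the proof into five parts: (i) each $L_x$ is a bijection with $L_x^{-1}=\sigma_e\sigma_x^{-1}$; (ii) $(X,L)$ satisfies the cycle-set law \eqref{eq:cc}; (iii) $(X,L)$ is non-degenerate; (iv) $\mathbf{R}$ equals the $\tau$ induced by $L$; and (v) the $2$-reductive law \eqref{eq:red} holds. Part (i) is immediate, $L_x=\sigma_x\sigma_e^{-1}$ being a composite of bijections.

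The workhorse for everything except (ii) is a single consequence of the level-$2$ law \eqref{eq:2per}, which I would isolate at the outset: for all $x,y\in X$,
\[
\sigma_{\sigma_x\sigma_e^{-1}(y)}=\sigma_y .
\]
Indeed, with $w=\sigma_e^{-1}(y)$, formula \eqref{eq:2per} (taking $z=e$) gives $\sigma_{\sigma_x(w)}=\sigma_{\sigma_e(w)}=\sigma_y$. Granting this collapse identity, the easy parts fall out mechanically. For (v), $L_{L_x(y)}=\sigma_{L_x(y)}\sigma_e^{-1}=\sigma_y\sigma_e^{-1}=L_y$. For (iii), the non-degeneracy map of $L$ is $a\mapsto L_a^{-1}(a)=\sigma_e\big(\sigma_a^{-1}(a)\big)$, a composite of the bijection $\sigma_e$ with the non-degeneracy map \eqref{eq:ND} of $\sigma$. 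For (iv), the collapse identity yields both $L^{-1}_{L_x(y)}=\sigma_e\sigma_y^{-1}$ and, via \eqref{eq:ccsigma}, $\tau_{\sigma_e^{-1}(y)}(x)=\sigma^{-1}_{\sigma_x\sigma_e^{-1}(y)}(x)=\sigma_y^{-1}(x)$, so that $\mathbf{R}_y(x)=\sigma_e\sigma_y^{-1}(x)=L^{-1}_{L_x(y)}(x)$, which is exactly \eqref{eq:ccsigma} for $L$.

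The heart of the argument is part (ii). Substituting $L^{-1}_x=\sigma_e\sigma_x^{-1}$ into \eqref{eq:cc} written for $L$, applying the collapse identity to the inner subscript $L^{-1}_a(b)=\sigma_e\sigma_a^{-1}(b)$ (which forces $\sigma_{L^{-1}_a(b)}=\sigma_b$), and cancelling the common leading factor $\sigma_e$, the cycle-set law for $L$ reduces to the single identity
\[
\sigma_b^{-1}\,\sigma_e\,\sigma_a^{-1}=\sigma_a^{-1}\,\sigma_e\,\sigma_b^{-1}\qquad(a,b\in X),
\]
equivalently to the assertion that the maps $L_x=\sigma_x\sigma_e^{-1}$ commute pairwise. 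This commutativity is the one genuinely nontrivial point and I expect it to be the main obstacle: every step above used only the level-$2$ law, and it is precisely here that the cycle-set law \eqref{eq:cc} for $\sigma$ must enter. The plan is to derive it by feeding \eqref{eq:cc} for $\sigma$ into the level-$2$ collapse. That this is the right target is confirmed a posteriori: once $(X,L)$ is a cycle set, part (v) makes it $2$-reductive, and in any $2$-reductive cycle set the relation $\sigma_{\sigma_a^{-1}(b)}=\sigma_b$ turns \eqref{eq:cc} straight into $\sigma_a\sigma_b=\sigma_b\sigma_a$; so $\langle L_x\rangle$ is abelian, and establishing the displayed identity directly from the data on $\sigma$ is the crux that closes the proof.
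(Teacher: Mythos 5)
Your reductions are correct as far as they go: the collapse identity $\sigma_{\sigma_x\sigma_e^{-1}(y)}=\sigma_y$ is a valid instance of \eqref{eq:2per} (indeed \eqref{eq:2per} says $\sigma_{\sigma_u(w)}$ is independent of $u$, which is the form you actually use), and parts (i), (iii), (iv), (v) are complete, as is your reduction of the cycle-set law \eqref{eq:cc} for $L$ to the pairwise commutativity $L_aL_b=L_bL_a$, i.e.\ $\sigma_b^{-1}\sigma_e\sigma_a^{-1}=\sigma_a^{-1}\sigma_e\sigma_b^{-1}$. But at exactly the point you yourself call the heart of the argument, the proposal stops being a proof: ``the plan is to derive it by feeding \eqref{eq:cc} for $\sigma$ into the level-$2$ collapse'' is an intention, not a derivation, and your a posteriori confirmation is circular --- it presupposes that $(X,L)$ is already a cycle set, which is precisely what part (ii) must establish. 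Since this is the only place where \eqref{eq:cc} for $\sigma$ enters, the proposal has a genuine gap at its declared crux. (For the record, the paper under review gives no proof either: the statement is imported from \cite[Theorem 7.12]{JPZ19}; the identity you are missing is exactly condition \eqref{eq:sigma}, which the paper attributes to \cite[Theorem 6.12]{JPZ19}.)

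The gap is fillable, and along the very lines you sketch, so the flaw is one of execution rather than strategy. Write $\hat\sigma_c:=\sigma_{\sigma_e(c)}$; by \eqref{eq:2per}, $\hat\sigma_c=\sigma_{\sigma_u(c)}$ for every $u\in X$. Substituting $b=\sigma_a(c)$ into \eqref{eq:cc} and collapsing the subscript $\sigma_{\sigma_a(c)}=\hat\sigma_c$ gives
\begin{align*}
\sigma_c^{-1}\sigma_a^{-1}&=\sigma^{-1}_{\hat\sigma_c^{-1}(a)}\,\hat\sigma_c^{-1}
\qquad\text{for all }a,c\in X;
\end{align*}
now substituting $a=\hat\sigma_c(d)$ and collapsing once more, $\sigma_{\hat\sigma_c(d)}=\hat\sigma_d$ (the subscript has the form $\sigma_u(d)$ with $u=\sigma_e(c)$), yields $\sigma_c^{-1}\hat\sigma_d^{-1}=\sigma_d^{-1}\hat\sigma_c^{-1}$, i.e.\ $\hat\sigma_d\sigma_c=\hat\sigma_c\sigma_d$, which is precisely \eqref{eq:sigma}. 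Setting $d=e$ gives $\hat\sigma_c=\hat\sigma_e\sigma_c\sigma_e^{-1}$, and substituting this back and cancelling $\hat\sigma_e$ gives $\sigma_d\sigma_e^{-1}\sigma_c=\sigma_c\sigma_e^{-1}\sigma_d$, the commutativity you needed. With these few lines inserted in part (ii), your five-part plan becomes a complete and correct proof of the theorem.
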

We call the solution $(X,L,\mathbf{R})$ from Theorem \ref{th:2persol} the $\sigma_e^{-1}$-\emph{isotope} of $(X,\sigma,\tau)$. Clearly, $L_e=id$. Moreover, by \cite[Theorem 6.12]{JPZ19}, the solution $(X,L,\mathbf{R})$ satisfies the condition from Theorem~\ref{th:pi-isotope}:

\begin{align}
& L_{\sigma_e(x)}\sigma_e L_y=L_{\sigma_e(y)}\sigma_e L_x. \label{eq:sigma}
\end{align}

A mapping $\varphi\colon X\to X'$ is a \emph{homomorphism} of two solutions $(X,\sigma,\tau)$ and $(X',\sigma',\tau')$ if, for each $x\in X$,
\[
\varphi\sigma_x=\sigma'_{\varphi(x)}\varphi.
\]

If a finite abelian group $G$ has a decomposition as a direct sum $G=\sum_{i=1}^{t}C_{i}$, where $C_i$ is a cyclic group of order $n_i$ and $n_1|n_2$, $n_2|n_3$, $\ldots$, $n_{t-1}|n_t$, then one says that $G$ has \emph{invariant factors} $(n_1,n_2,\ldots,n_t)$ see e.g. \cite[Chapter 6]{R}.
\begin{theorem}\cite[Corollary 6.14]{R}\label{thm:Rotman}
Two finite abelian groups $G$ and $H$ are isomorphic if and only if they have the same invariant factors.
\end{theorem}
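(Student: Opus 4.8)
The plan is to prove the two implications separately, with essentially all the content residing in the forward direction. The backward implication is immediate: if $G$ and $H$ share the invariant factors $(n_1,\ldots,n_t)$, then by definition each of them is isomorphic to the single group $\bigoplus_{i=1}^{t}\Z/n_i\Z$, and hence $G\cong H$. So I would dispose of this in one line and concentrate on showing that an isomorphism $\varphi\colon G\to H$ forces the invariant factors to coincide.

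The first step in the forward direction is to replace invariant factors by elementary divisors. There is a standard, purely combinatorial bijection between the invariant factor data $n_1\mid n_2\mid\cdots\mid n_t$ and the multiset of prime-power \emph{elementary divisors}: one factors each $n_i=\prod_p p^{a_{i,p}}$ and records the nonzero prime powers $p^{a_{i,p}}$; conversely, the elementary divisors belonging to each prime, listed in nonincreasing order and read off across the primes, reassemble the $n_i$ (the divisibility $n_1\mid\cdots\mid n_t$ being exactly what makes this reconstruction unambiguous). Since this correspondence is a bijection, two finite abelian groups have the same invariant factors if and only if they have the same multiset of elementary divisors. It therefore suffices to prove that the multiset of elementary divisors is an isomorphism invariant.

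The main step is to recover, from the abstract group alone, how many elementary divisors equal each given prime power. Fix a prime $p$ and consider the subgroups $p^kG=\{p^kg:g\in G\}$. These are characteristic, so $\varphi(p^kG)=p^kH$ for every $k$, whence $\varphi$ carries each quotient $p^{k-1}G/p^kG$ — naturally a vector space over $\Z/p\Z$ — isomorphically onto $p^{k-1}H/p^kH$; in particular its $\Z/p\Z$-dimension depends only on the isomorphism type. The heart of the argument is a direct computation on a decomposition $G\cong\bigoplus_{j}(\Z/p^j\Z)^{e_j}\oplus C$, with $|C|$ coprime to $p$, giving
\[
\dim_{\Z/p\Z}\bigl(p^{k-1}G/p^kG\bigr)=\sum_{j\geq k}e_j .
\]
Taking successive differences of the left-hand quantities then isolates each $e_k$, so the number of elementary divisors equal to $p^k$ is an isomorphism invariant for every $k\geq1$. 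Ranging over all primes $p$ recovers the full multiset of elementary divisors, and hence the invariant factors, which completes the proof.

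The one point I would flag is that this argument presupposes the \emph{existence} of some elementary divisor (equivalently, invariant factor) decomposition, i.e.\ the structure theorem for finite abelian groups; I take this as given, since the statement itself speaks of invariant factors, and what is genuinely being proved here is their uniqueness. Accordingly, the expected main obstacle is not conceptual but computational: establishing the displayed dimension formula and confirming that each $p^kG$ is preserved by every isomorphism, since everything else then follows by taking differences and varying the prime.
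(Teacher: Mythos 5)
Your proof is correct. Note, though, that the paper does not prove this statement at all: it is imported verbatim from \cite[Corollary 6.14]{R} and used as a black box (its only role is in Proposition~\ref{prop:6}, to deduce $n_1=m_1$ and $n_2=m_2$ from an isomorphism of the permutation groups), so there is no in-paper proof to compare against. Your argument --- translating invariant factors into elementary divisors and recovering the multiplicity of each $p^k$ from the isomorphism-invariant dimensions $\dim_{\Z/p\Z}\bigl(p^{k-1}G/p^kG\bigr)$ by taking successive differences, with existence of the decomposition taken as given --- is precisely the standard textbook uniqueness proof, essentially the one in Rotman's book itself, so there is no substantive divergence to report.
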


\section{A construction of solutions}\label{sec2}

In this section we present a construction of a
solution with an abelian permutation group.
Recall that $\Z_k$ is a shortcut for $\Z/k\Z$; in particular $\Z_1$ is a trivial group.

\begin{theorem}\label{thm:1}
 Let $n_1,n_2\in \Z^+$ be such that
$n_1\mid n_2$. Let $r\in\{0,1,\ldots,n_2/n_1-1\}$ be such that $n_2\mid n_1r^2$. Then
 $(X,\sigma,\tau)$ with~$X=\Z_{n_1}\times \Z_{n_2}$ and
 \begin{align}\label{eq:indec}
 \sigma_{(a,i)}((b,j))=(b-ar+i,j+ir-ar^2+1)
 \end{align}
 is an indecomposable solution of size~$n_1n_2$
 and multipermutational level at most~$2$
 with the permutation group $\mathcal{G}(X)$ isomorphic to $\Z_{n_1}\times \Z_{n_2}$.
\end{theorem}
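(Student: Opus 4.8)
The plan is to read every $\sigma_{(a,i)}$ as a translation of the abelian group $X=\Z_{n_1}\times\Z_{n_2}$ and to reduce all four assertions to elementary facts about the subgroup of translations they generate. Writing $t(a,i):=(i-ar,\,ir-ar^2+1)$, formula \eqref{eq:indec} says precisely that $\sigma_{(a,i)}$ is the translation $x\mapsto x+t(a,i)$. Before anything else I would check that $\sigma_{(a,i)}$ is well defined, and this is exactly where the two divisibility hypotheses enter: in the first coordinate the term $i-ar$ must be read in $\Z_{n_1}$, so reducing $i\in\Z_{n_2}$ modulo $n_1$ requires $n_1\mid n_2$; in the second coordinate the term $-ar^2$ must be read in $\Z_{n_2}$, and for $ar^2$ to depend only on $a\bmod n_1$ one needs $n_2\mid n_1r^2$. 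Granting this, each $\sigma_{(a,i)}$ is a bijection, and since all the $\sigma$'s are translations of an abelian group they pairwise commute.

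Next I would prove $(X,\sigma)$ is a cycle set. For translations the defining identity \eqref{eq:cc} is equivalent to
\begin{equation*}
t(a)+t\bigl(b-t(a)\bigr)=t(b)+t\bigl(a-t(b)\bigr)\qquad\text{for all }a,b\in X,
\end{equation*}
because $\alpha_x^{-1}$ is the translation by $-t(x)$ and composing translations adds their vectors. Substituting the formula for $t$ and expanding, the contributions involving the first argument and those involving the second cancel in each coordinate (the summands $\pm ir$ and $\pm ar^2$, together with their $r$-multiples, drop out), so the identity reduces to the corresponding polynomial identity over $\Z$; the hypotheses $n_1\mid n_2$ and $n_2\mid n_1r^2$ are what make each expression being manipulated well defined in the correct cyclic group. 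As $X$ is finite, non-degeneracy is automatic by Rump's theorem quoted after Theorem \ref{th:rclq}, so by Theorem \ref{th:rclq} the triple $(X,\sigma,\tau)$ is a solution of size $n_1n_2$.

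For the permutation group and indecomposability I would argue simultaneously. The group $\mathcal{G}(X)$ is contained in the regular group $T$ of all translations of $X$, and it already contains the translations by $t(0,0)=(0,1)$ and by $t(0,1)=(1,1)$; their difference is the translation by $(1,0)$, so $\mathcal{G}(X)$ contains the translations by the generators $(1,0)$ and $(0,1)$ of $X$. Hence $\mathcal{G}(X)=T\cong\Z_{n_1}\times\Z_{n_2}$, and the translation action is regular, in particular transitive, so the solution is indecomposable.

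Finally, the multipermutational level. By Theorem \ref{GI:mper} it suffices to verify \eqref{eq:2per}, and since $\sigma_w=\sigma_{w'}$ exactly when $t(w)=t(w')$, this amounts to showing that $t(\sigma_y(x))$ does not depend on $y$. Writing $x=(b,j)$ and computing $t(\sigma_y(x))=t\bigl(x+t(y)\bigr)$, the $y$-dependent contributions again cancel coordinatewise, leaving $t(\sigma_y(x))=(j-br+1,\,jr-br^2+r+1)$, independent of $y$; thus \eqref{eq:2per} holds and the solution has multipermutational level at most $2$. The only genuinely delicate point in the whole argument is the bookkeeping of which reductions are legitimate: every cancellation used above must respect that the first coordinate lives in $\Z_{n_1}$ and the second in $\Z_{n_2}$, and it is precisely the two divisibility conditions that guarantee this, so I would invoke them explicitly at each step rather than treat the computation as purely formal.
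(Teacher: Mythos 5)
Your proof is correct in substance and takes a cleaner route than the paper for part of the argument, though it contains one arithmetic slip you should repair. The paper proves everything by direct coordinate computation: it writes down $\sigma_{(a,i)}^{-1}$, expands both sides of \eqref{eq:cc} and observes the symmetry, expands $\sigma_{(a,i)}\sigma_{(b,j)}$ to get commutativity, expands $\sigma_{\sigma_{(a,i)}((b,j))}$ to get level at most~$2$, and then exhibits the two generators $\sigma_{(0,0)}$ (of order $n_2$) and $\sigma_{(0,1)}\sigma_{(0,0)}^{-r-1}$ (which sends $(a,i)$ to $(a+1,i)$ and has order $n_1$), checks transitivity, invokes regularity of transitive abelian actions to get $|\mathcal{G}(X)|=n_1n_2$, and verifies that the two cyclic subgroups intersect trivially. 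Your observation that every $\sigma_{(a,i)}$ is the translation by $t(a,i)=(i-ar,\,ir-ar^2+1)$ packages the same polynomial cancellations more conceptually: commutativity is free, \eqref{eq:cc} becomes the vector identity $t(x)+t\bigl(y-t(x)\bigr)=t(y)+t\bigl(x-t(y)\bigr)$, condition \eqref{eq:2per} becomes $y$-independence of $t\bigl(x+t(y)\bigr)$ (your value $(j-br+1,\,jr-br^2+r+1)$ is correct), and --- the genuine gain --- the group-theoretic bookkeeping collapses: $\mathcal{G}(X)$ is a subgroup of the regular translation group $T\cong\Z_{n_1}\times\Z_{n_2}$, so once the translation vectors generate all of $X$ you obtain $\mathcal{G}(X)=T$, hence regularity, transitivity, and the isomorphism type in one stroke, with no order or intersection computations. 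The slip: $t(0,1)=(1,\,r+1)$, not $(1,1)$, so the difference $t(0,1)-t(0,0)=(1,r)$ is not $(1,0)$ when $r\neq 0$. The repair is immediate and exactly mirrors the paper's combination $\sigma_{(0,1)}\sigma_{(0,0)}^{-r-1}$: the subgroup generated by $(0,1)$ and $(1,\,r+1)$ contains $(1,\,r+1)-(r+1)\cdot(0,1)=(1,0)$, hence is all of $\Z_{n_1}\times\Z_{n_2}$, and the rest of your argument goes through unchanged.
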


\begin{proof}
First of all, we prove that the expression is
well defined since we mix elements from $\Z_{n_1}$
and $\Z_{n_2}$ in both coordinates.
The addition of~$i$ in the first coordinate is well defined
since $n_1\mid n_2$. The substraction of~$ar^2$
in the second coordinate is well defined as well
since $n_1r^2\equiv 0\pmod {n_2}$.

In the beginning, we check directly that $(X,\sigma)$ is
a cycle-set. Clearly each mapping $\sigma_{(a,i)}$ is bijective and
\[
\sigma_{(a,i)}^{-1}( (b,j))=(b+ar-i,j-ir+ar^2-1).
\]

Now the left side of \eqref{eq:cc} is the following:
 \begin{multline*}
  \sigma_{\sigma_{(a,i)}^{-1}((b,j))}^{-1}\sigma_{(a,i)}^{-1}((c,k))=\sigma_{(b+ar-i,j-ir+ar^2-1)}^{-1}((c+ar-i,k-ir+ar^2-1))=\\
  (c+ar-i+(b+ar-i)r-j+ir-ar^2+1,k-ir+ar^2-1
  -(j-ir+ar^2-1)r+(b+ar-i)r^2-1)\\
  =(c+ar-i+br-j+1,k-ir+ar^2-jr-r+br^2-2).
 \end{multline*}

 This expression is symmetric with respect to the exchange $a\leftrightarrow b$ and $i\leftrightarrow j$ and hence $\sigma$ satisfies the cycle condition \eqref{eq:cc}. Since $X$ is finite, we obtain that $(X,\sigma)$ is a non-degenerate cycle-set and, by Theorem \ref{th:rclq}, $(X,\sigma,\tau)$ is a solution with $\tau_{(b,j)}((a,i))=(a+br-(j+1),i-(j+1)r+br^2-1)$.

Furthermore,
 \begin{multline*}
  \sigma_{(a,i)}\sigma_{(b,j)} ((c,k)) =\sigma_{(a,i)}
  ((c-br+j,k+jr-br^2+1))=(c-br+j-ar+i,\\
  k+jr-br^2+1+ir-ar^2+1)
  =\sigma_{(b,j)} ((c-ar+i,k+ir-ar^2+1))=\sigma_{(b,j)}\sigma_{(a,i)} ((c,k))
 \end{multline*}
 which shows that the permutation group $\mathcal{G}(X)$ is abelian.
Moreover,
 \begin{multline*}
  \sigma_{\sigma_{(a,i)}((b,j))}((c,k)) =
  \sigma_{(b-ar+i,j+ir-ar^2+1)} ((c,k))
  =(c-(b-ar+i)r+j+ir-ar^2+1,\\
  k+(j+ir-ar^2+1)r-(b-ar+i)r^2+1)
  =(c-br+j+1,k+jr+r-br^2+1).
 \end{multline*}
 The end of above expression does not depend on $(a,i)$, proving that the solution $(X,\sigma,\tau)$ is of multipermutational level at most~$2$, by Theorem~\ref{GI:mper}.

 Now, the permutation
 $\sigma_{(0,0)}$ is evidently of order~$n_2$.
 The permutation $\sigma_{(0,1)}\sigma_{(0,0)}^{-r-1}$
 sends $(a,i)$ to $(a+1,i)$ and therefore it is
 a permutation of order $n_1$. Furthermore,
 \[ \sigma_{(0,0)}^j \ \big(\sigma_{(0,1)}\sigma_{(0,0)}^{-r-1}\big)^b \ ((a,i)) = (a+b,i+j)\]
 and therefore $\mathcal{G}(X)$ acts transitively on~$X$. A transitive action of an abelian group has to be regular
 and therefore the order of  $\mathcal{G}(X)$ is~$n_1n_2$.
 Finally,
 $\langle\sigma_{(0,0)}\rangle\cap\langle\sigma_{(0,1)}\sigma_{(0,0)}^{-r-1}\rangle=\{\mathrm{id}\}$
 and therefore the permutation group $\mathcal{G}(X)$ is isomorphic
 to $\Z_{n_1}\times \Z_{n_2}$.
\end{proof}

We will denote the solution described in Theorem \ref{thm:1} by $\mathcal{C}(n_1,n_2,r)$, for $n_1,n_2,r$ specified in Theorem \ref{thm:1}.
\begin{remark}\label{rem:multi1}
Indecomposable solutions of multipermutational level 1 are solutions of the form $C(1,n,0)$, for some $n$. Note that they always have a cyclic permutation group.
\end{remark}
\begin{example}\label{exm:1}
We describe some solutions  of the form $\mathcal{C}(n_1,n_2,r)$ for chosen $n_1,n_2$.
\begin{enumerate}
\item Let $n_1=1$, $n_2\in\Z^+$ and consider the solution $(X,\sigma,\tau)=\mathcal{C}(1,n_2,r)$. In this case we can omit the
first coordinate of the cartesian product $\Z_1\times \Z_{n_2}$ and we obtain
$\sigma_{i}(j)=j+ir+1$ and $\tau_{j}(i)=i-(j+1)r-1$. We are interested now in two special cases:
\begin{enumerate}
\item for $n_2=pq$, where $p$ and $q$ are two distinct prime numbers, $r$ must be equal $0$. Hence, for the solution
$\mathcal{C}(1,pq,0)$ we have  $\sigma_{i}(j)=j+1$ and $\tau_{j}(i)=i-1$.
\item if $n_2=p^2$, for a prime $p$, then $r=pt$ for some $t\in\{0,\ldots,p-1\}$. Since $r^2\equiv 0\pmod {p^2}$, the solution $\mathcal{C}(1,p^2,pt)$ for each such $t$ is the following: $\sigma_{i}(j)=j+pti+1$ and $\tau_j(i)=i-1-pt(j+1)$.
\end{enumerate}
\item Finally, if we take $n_1=n_2$ then $r=0$. In this case we obtain $(X,\sigma,\tau)=\mathcal{C}(n_1,n_1,0)$
 with $\sigma_{(a,i)}((b,j))=(b+i,j+1)$ and $\tau_{(b,j)}((a,i))=(a-(j+1),i-1)$.
\end{enumerate}
All the described solutions for primes $p$ and $q$, namely $\mathcal{C}(1,pq,0), \mathcal{C}(1,p^2,pt)$, for $t\in\{0,\ldots,p-1\}$ with $pt<p^2-1$, and $\mathcal{C}(p,p,0)$
 are exactly the same ones as in \cite[Corollary 22]{CPR}.

\end{example}

It is useful to see that different choices of the
parameters of the construction yield non-isomorphic solutions. For this, we need a technical lemma.

\begin{lemma}\label{lem:1}
 $\sigma^{n_1}_{\sigma_{(a,i)}((a,i))}=\sigma_{(a,i)}^{(r+1)n_1}$,
 for each $(a,i)\in \Z_{n_1}\times \Z_{n_2}$.
\end{lemma}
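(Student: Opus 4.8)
The plan is to exploit the fact that every $\sigma_{(a,i)}$ acts on $X=\Z_{n_1}\times\Z_{n_2}$ as a translation, so that its powers --- and hence both sides of the claimed identity --- are again translations, and the whole lemma reduces to comparing two translation vectors coordinatewise.

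First I would record that, reading off \eqref{eq:indec}, the map $\sigma_{(a,i)}$ sends $(b,j)$ to $(b+(i-ar),\,j+(ir-ar^2+1))$; that is, it is translation by the vector $v(a,i)=(i-ar,\ ir-ar^2+1)$ in the abelian group $X$. Consequently $\sigma_{(a,i)}^{m}$ is translation by $m\,v(a,i)$ for every integer $m$, and in particular the right-hand side $\sigma_{(a,i)}^{(r+1)n_1}$ is translation by $(r+1)n_1\,v(a,i)$.

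For the left-hand side I would set $(a',i'):=\sigma_{(a,i)}((a,i))$ and compute $a'=a-ar+i$ and $i'=i+ir-ar^2+1$ directly from \eqref{eq:indec}. A short substitution then gives the two components of $v(a',i')$: namely $i'-a'r=(i-ar)+1$ and $i'r-a'r^2+1=(ir-ar^2+1)+r$, so $\sigma_{\sigma_{(a,i)}((a,i))}^{n_1}$ is translation by $n_1\,v(a',i')$. It remains to show $n_1\,v(a',i')=(r+1)n_1\,v(a,i)$ in $X$, coordinate by coordinate.

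Comparing first coordinates is immediate: both are integer multiples of $n_1$ and therefore vanish in $\Z_{n_1}$. The only real content is the second coordinate, where the identity to be verified is
\[
n_1\big((ir-ar^2+1)+r\big)\equiv (r+1)n_1(ir-ar^2+1)\pmod{n_2}.
\]
Cancelling the common term $n_1(ir-ar^2+1)$ reduces this to $n_1 r\equiv r\,n_1(ir-ar^2+1)\pmod{n_2}$, and a further simplification turns it into $n_1 r^2(i-ar)\equiv 0\pmod{n_2}$. This is exactly where the hypothesis $n_2\mid n_1r^2$ from Theorem~\ref{thm:1} is used, and it finishes the proof. I expect the only delicate point to be bookkeeping: keeping track of which coordinate lives in $\Z_{n_1}$ and which in $\Z_{n_2}$, and invoking $n_2\mid n_1r^2$ at precisely the right moment rather than the divisibility $n_1\mid n_2$ (which is what legitimises the first-coordinate computation in the first place).
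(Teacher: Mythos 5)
Your proposal is correct and follows essentially the same route as the paper: the paper's proof likewise treats $\sigma_{(a,i)}$ as translation by the vector $(\delta_{(a,i)},\,r\delta_{(a,i)}+1)$ with $\delta_{(a,i)}=i-ar$, shows that applying $\sigma_{(a,i)}$ to $(a,i)$ simply increments $\delta$ by $1$, and then compares the $n_1$-th and $(r+1)n_1$-th powers coordinatewise, discharging the second coordinate via $n_2\mid n_1r^2$ exactly as you do. Your isolation of the final congruence $n_1r^2(i-ar)\equiv 0\pmod{n_2}$ is just a slightly more explicit rendering of the same cancellation.
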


\begin{proof}
 Let us denote by $\delta_{(a,i)}=i-ar$ to simplify the formula \eqref{eq:indec}:
 \[
 \sigma_{(a,i)}((b,j))=(b+\delta_{(a,i)},j+r\delta_{(a,i)}+1).
 \]
 In particular,
 \[
 \sigma_{(a,i)}((a,i))=(a+\delta_{(a,i)},i+r\delta_{(a,i)}+1),
 \]
 and
\begin{align*}
&\sigma_{\sigma_{(a,i)}((a,i))}((b,j))=\sigma_{(a+\delta_{(a,i)},i+r\delta_{(a,i)}+1)}((b,j))=\\
&(b-r(a+\delta_{(a,i)})+i+r\delta_{(a,i)}+1,j+r(i+r\delta_{(a,i)}+1)-r^2(a+\delta_{(a,i)})+1)=\\
&(b-ra+i+1,j+ri+r-r^2a+1)=(b+\delta_{(a,i)}+1,j+r(\delta_{(a,i)}+1)+1).
\end{align*}
Now notice that for any $k\in \Z^+$
 \[
 \sigma_{(a,i)}^k((b,j))=(b+k\delta_{(a,i)},j+k(r\delta_{(a,i)}+1)),
 \]
 and
 \[
\sigma^k_{\sigma_{(a,i)}((a,i))}((b,j))=(b+k(\delta_{(a,i)}+1),j+k(r(\delta_{(a,i)}+1)+1)).
 \]
 Hence, we obtain
  \begin{align*}
& \sigma_{(a,i)}^{(r+1)n_1}((b,j))=(b+(r+1)n_1\delta_{(a,i)},j+(r+1)n_1(r\delta_{(a,i)}+1))=\\
&(b,j+rn_1+rn_1\delta_{(a,i)}+n_1)=(b+n_1(\delta_{(a,i)}+1),j+n_1(r(\delta_{(a,i)}+1)+1))=\\
&\sigma_{\sigma_{(a,i)}((a,i))}^{n_1}((b,j)),
 \end{align*}
which finishes the proof.
\end{proof}

\begin{proposition}\label{prop:6}
Two finite solutions $(X,\sigma,\tau)=\mathcal{C}(n_1,n_2,r)$ and $(X',\sigma',\tau')=\mathcal{C}(m_1,m_2,s)$ are
isomorphic if and only if $n_1=m_1$, $n_2=m_2$ and $r=s$.
 \end{proposition}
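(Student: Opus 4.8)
The plan is to treat the two directions separately. The ``if'' direction is immediate, since identical parameters yield literally the same operation $\sigma$ through \eqref{eq:indec}, so the identity map is an isomorphism. For the ``only if'' direction, suppose $\varphi\colon X\to X'$ is an isomorphism of solutions, so that $\varphi\sigma_{(a,i)}=\sigma'_{\varphi((a,i))}\varphi$ for all $(a,i)$. First I would extract the coarse invariants: conjugation by $\varphi$ sends each generator $\sigma_{(a,i)}$ to $\sigma'_{\varphi((a,i))}$ and hence restricts to a group isomorphism $\mathcal{G}(X)\to\mathcal{G}(X')$. By Theorem~\ref{thm:1} these groups are $\Z_{n_1}\times\Z_{n_2}$ and $\Z_{m_1}\times\Z_{m_2}$, and the hypotheses $n_1\mid n_2$, $m_1\mid m_2$ make $(n_1,n_2)$ and $(m_1,m_2)$ their invariant factors. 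Theorem~\ref{thm:Rotman} then forces $n_1=m_1$ and $n_2=m_2$. (Equivalently, the exponent of the group is $n_2$ and its order is $n_1n_2$, both isomorphism invariants, which pins down $n_2$ and then $n_1$.)

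It remains to show $r=s$ once $n_1=m_1$ and $n_2=m_2$. Here the key tool is Lemma~\ref{lem:1}, which I would make isomorphism-invariant by conjugating by $\varphi$. Applying $\varphi(\cdot)\varphi^{-1}$ to the identity $\sigma_{\sigma_{(a,i)}((a,i))}^{n_1}=\sigma_{(a,i)}^{(r+1)n_1}$ and using $\varphi(\sigma_{(a,i)}((a,i)))=\sigma'_{\varphi((a,i))}(\varphi((a,i)))$ turns it into
\[
\big(\sigma'_{\sigma'_{y}(y)}\big)^{n_1}=\big(\sigma'_{y}\big)^{(r+1)n_1},
\]
where $y=\varphi((a,i))$. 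But Lemma~\ref{lem:1} applied inside $\mathcal{C}(n_1,n_2,s)$ gives $\big(\sigma'_{\sigma'_{y}(y)}\big)^{n_1}=\big(\sigma'_{y}\big)^{(s+1)n_1}$ for every $y\in X'$. Comparing the two and using that $\varphi$ is onto yields $\big(\sigma'_{y}\big)^{(r-s)n_1}=\mathrm{id}$ for all $y\in X'$.

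Finally I would evaluate this at $y=(0,0)$. From \eqref{eq:indec} with parameter $s$ one has $\sigma'_{(0,0)}((b,j))=(b,j+1)$, a permutation of order exactly $n_2$, as already noted in the proof of Theorem~\ref{thm:1}. Hence $\big(\sigma'_{(0,0)}\big)^{(r-s)n_1}=\mathrm{id}$ gives $n_2\mid(r-s)n_1$, i.e. $r\equiv s\pmod{n_2/n_1}$. Since both $r$ and $s$ lie in $\{0,1,\dots,n_2/n_1-1\}$, their difference lies strictly between $-n_2/n_1$ and $n_2/n_1$, forcing $r=s$. The only real obstacle is this last step --- isolating a single numerical invariant that separates the parameter $r$ --- and Lemma~\ref{lem:1} together with the order-$n_2$ element $\sigma'_{(0,0)}$ is exactly what supplies it; the group-theoretic identification of $n_1$ and $n_2$ is routine.
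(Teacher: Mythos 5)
Your proposal is correct and follows essentially the same route as the paper: invariant factors (Theorem~\ref{thm:Rotman}) give $n_1=m_1$, $n_2=m_2$, and conjugating the identity of Lemma~\ref{lem:1} by $\varphi$ yields $r\equiv s\pmod{n_2/n_1}$, hence $r=s$. Your only departure is to make explicit, via the order-$n_2$ permutation $\sigma'_{(0,0)}$, the step the paper leaves implicit in asserting that equality of $\sigma_{(a',i')}^{(r+1)n_1}$ and $\sigma_{(a',i')}^{(s+1)n_1}$ for all $(a',i')$ forces the congruence.
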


\begin{proof}
 Suppose that $(X,\sigma,\tau)\cong (X',\sigma',\tau')$. Clearly,
 isomorphic solutions have isomorphic permutation groups.
Moreover, two finite abelian groups are isomorphic if and only if they have the same invariant factors.
Hence, $n_1=m_1$ and $n_2=m_2$.
We just need to prove that $r=s$.

 Let $\varphi$ be an isomorphism from $(X,\sigma,\tau)$ to $(X',\sigma',\tau')$.
 Choose $(a,i)\in X$ and let~$(a',i')=\varphi((a,i))$. Then, by Lemma \ref{lem:1},
 \[
  \sigma_{(a',i')}^{(s+1)n_1}=
  \sigma_{\sigma_{(a',i')}((a',i'))}^{n_1}=
  \sigma_{\sigma_{\varphi((a,i))}(\varphi((a,i)))}^{n_1}=
  \varphi\sigma_{\sigma_{(a,i)}((a,i))}^{n_1}\varphi^{-1}=\varphi\sigma_{(a,i)}^{(r+1)n_1}\varphi^{-1}=\sigma_{(a',i')}^{(r+1)n_1}
 \]
 and both permutations are equal, for all~$a,i$, if and only if $r\equiv s\pmod{n_2/n_1}$. Since $0\leq r,s< n_2/n_1$, this forces $r=s$.
\end{proof}

\section{Solutions with an abelian permutation group}\label{sec2b}

In this section we prove that
the construction from Theorem~\ref{thm:1}
is a generic one, that means,
this way we obtain all the finite solutions of multipermutational level at most 2 with an abelian permutation group.
 Let us start with some auxiliary observations.
 The first one is true for infitite solutions too.

\begin{proposition}\label{prop:3}
 Let $(X,\sigma,\tau)$ be an indecomposable
 solution of multipermutational level at most~$2$
 with an abelian permutation group. Then
 \begin{enumerate}
  \item the permutation group $\mathcal{G}(X)$ is generated by at most two elements,
  \item for all $x\in X$, $\sigma_x$ have the same order,
  \item for each~$x\in X$ and $i\in \Z^+$, $\sigma_{\sigma^i_x(x)}=\sigma_{\sigma_x(x)}^i\sigma_x^{i+1}$.
 \end{enumerate}
\end{proposition}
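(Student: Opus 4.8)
The plan is to reduce all three items to a single pivotal identity: that the group element $u_x:=\sigma_{\sigma_x(x)}\,\sigma_x^{-1}\in\mathcal G(X)$ is independent of $x$. Two facts will be used freely. First, inverting the cycle-set identity \eqref{eq:cc} gives the equivalent relation $\sigma_a\sigma_{\sigma_a^{-1}(b)}=\sigma_b\sigma_{\sigma_b^{-1}(a)}$; specialising $b=\sigma_a(c)$ and using \eqref{eq:ccsigma} turns it into $\sigma_a\sigma_c=\rho_c\,\sigma_{\tau_c(a)}$, where by Theorem~\ref{GI:mper} I abbreviate $\sigma_{\sigma_a(c)}=\rho_c$, a permutation depending only on $c$ (with $\rho_c=\sigma_{\sigma_c(c)}$). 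The crux — the step I expect to be the main obstacle — is to deduce from these, using the commutativity of $\mathcal G(X)$, that $\rho_x\sigma_x^{-1}$ is independent of $x$, i.e.\ $\rho_x\sigma_y=\rho_y\sigma_x$ for all $x,y$ (equivalently $\sigma_{\sigma_x(y)}\sigma_x=\sigma_{\sigma_y(x)}\sigma_y$). I would obtain this either by eliminating the auxiliary points $\tau_c(a)$ directly from the relation above, or by passing to the $2$-reductive $\sigma_e^{-1}$-isotope of Theorem~\ref{th:2persol} and invoking the mesh relation \eqref{eq:sigma}; commutativity is indispensable. Write $u$ for the common value.

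Granting this, part (iii) is a one-line induction. Fix $x$ and set $y_i=\sigma^i_x(x)$. Then, by Theorem~\ref{GI:mper} and the pivotal identity, $\sigma_{y_{i+1}}=\sigma_{\sigma_x(y_i)}=\rho_{y_i}=u\,\sigma_{y_i}$, so $\sigma_{\sigma^i_x(x)}=u^i\sigma_x=\sigma_{\sigma_x(x)}^{\,i}\,\sigma_x^{\,1-i}$, which is the assertion of (iii).

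For (i) and (ii) I would exploit the dynamics of the non-degeneracy bijection $T$ of \eqref{eq:ND}, $T(z)=\sigma_z^{-1}(z)$, together with the retraction. Since $(X,\sigma,\tau)$ is indecomposable of level at most $2$, its retraction $\mathrm{Ret}(X)$ is an indecomposable solution of level at most $1$, hence a cyclic permutation solution $\mathcal C(1,d,0)$ by Remark~\ref{rem:multi1}; consequently the map induced by $T$ on $\mathrm{Ret}(X)$ is a single $d$-cycle, so each $T$-orbit in $X$ meets every $\sim$-class and thus realises every value $\sigma_x$. Taking $b=T(z)$ in $\sigma_{\sigma_z(b)}=\rho_b$ gives $\sigma_z=\rho_{T(z)}=u\,\sigma_{T(z)}$, whence $\sigma_{T(z)}=u^{-1}\sigma_z$ and $\sigma_{T^k(z)}=u^{-k}\sigma_z$. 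Running $k$ over the $T$-orbit of a fixed $o$ shows every $\sigma_x$ lies in the coset $\sigma_o\langle u\rangle$; therefore $\mathcal G(X)=\langle\sigma_x:x\in X\rangle=\langle\sigma_o,u\rangle=\langle\sigma_o,\sigma_{\sigma_o(o)}\rangle$ is generated by two elements, proving (i).

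Finally, evaluating (iii) at $i=N:=\mathrm{ord}(\sigma_x)$ yields $u^N=\mathrm{id}$, so $\mathrm{ord}(u)\mid\mathrm{ord}(\sigma_x)$ for every $x$; hence $(u^{-1}\sigma_z)^{N}=u^{-N}\sigma_z^{N}=\mathrm{id}$ with $N=\mathrm{ord}(\sigma_z)$, giving $\mathrm{ord}(\sigma_{T(z)})\mid\mathrm{ord}(\sigma_z)$. As $T$ is a bijection of the finite set $X$, the values $\mathrm{ord}(\sigma_{T^k(z)})$ are non-increasing and periodic along each $T$-orbit, hence constant on it; since that orbit realises every $\sigma_x$, all the permutations $\sigma_x$ have a common order, which is (ii).
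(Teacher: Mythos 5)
Your pivotal identity ($u_x=\sigma_{\sigma_x(x)}\sigma_x^{-1}$ independent of $x$) is true and is in substance exactly what the paper proves, but as written you have not proved it: the step you yourself call the main obstacle is left as a choice between two unexecuted routes. Your second route does close it, in two lines, from ingredients you already have: by Theorem~\ref{GI:mper}, $\sigma_{\sigma_x(x)}=\sigma_{\sigma_e(x)}$, so writing $\rho=\sigma_e$ and $L_z=\sigma_z\rho^{-1}$ as in Theorem~\ref{th:2persol} gives $u_x=L_{\rho(x)}\rho\,\rho^{-1}L_x^{-1}=L_{\rho(x)}L_x^{-1}$, and \eqref{eq:sigma} together with commutativity yields $L_{\rho(x)}L_y=L_{\rho(y)}L_x$, i.e.\ $u_x=u_y$. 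This is precisely the mechanism of the paper's proof, there packaged as the induction \eqref{eq:rho} ($L_{\rho^i(e)}=L^i_{\rho(e)}$) plus the transitivity computation $L_x=L_{\rho(e)}^k$; so at the crux you and the paper coincide, and the gap is one of execution, not of idea.

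Where you genuinely diverge is in (i) and (ii). The paper proves (i) by writing an arbitrary $x$ as $\sigma_{y_1}\cdots\sigma_{y_k}(e)$ and using $2$-reductivity ($L_{L_y(z)}=L_z$) to get $\sigma_x=L_{\rho(e)}^k\rho$, and (ii) by raising this to the power $m=o(\sigma_e)$ and interchanging the roles of $x$ and $e$. You instead iterate the non-degeneracy map $T$ of \eqref{eq:ND} and use that the retraction of an indecomposable solution is an indecomposable permutation solution, hence cyclic. This works for finite $X$, at the price of two unproved (though easy) compatibilities --- indecomposability of $\mathrm{Ret}(X)$ and $\overline{T(z)}=T(\bar z)$ --- but there is a genuine gap in scope: the proposition carries no finiteness hypothesis, the paper explicitly notes it holds for infinite solutions, and it is applied to an infinite solution in Proposition~\ref{prop:13}. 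Your (ii) invokes ``$T$ is a bijection of the finite set $X$'' (periodicity of orders along a $T$-orbit) and your (i) appeals to a finite $\mathcal{C}(1,d,0)$; neither survives as written when $X$ is infinite. Both are repairable: for (ii), since every $\sigma_y$ lies in $\sigma_z\langle u\rangle$ and $u^{o(\sigma_z)}=\mathrm{id}$ whenever $o(\sigma_z)$ is finite, one gets $\sigma_y^{o(\sigma_z)}=\mathrm{id}$ directly, so the orders divide one another symmetrically with no periodicity argument (this is in effect the paper's role-swapping); for (i), the infinite retraction is the shift on $\Z$ and a $T$-orbit still meets every class.

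Finally, on (iii): your formula $\sigma_{\sigma_x^i(x)}=\sigma_{\sigma_x(x)}^i\sigma_x^{1-i}$ disagrees with the printed exponent $i+1$, but here the printed statement is the one in error (a sign slip), so your silent correction is right. Test $\mathcal{C}(1,n,0)$: all $\sigma_x$ equal a single $n$-cycle $c$, the left side is $c$, and $c^ic^{i+1}=c^{2i+1}\neq c$ for $n>2$, while $c^ic^{1-i}=c$. Theorem~\ref{prop:5}(4) confirms your version: with $\lambda=\sigma_{\rho(x)}\rho^{-1-r}$ it gives $\sigma_{\rho^j(x)}=\lambda^j\rho^{1+jr}=\sigma_{\rho(x)}^j\rho^{1-j}$. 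The same slip occurs in the paper's proof line $(\sigma_{\rho(e)}\sigma_e)^i\sigma_e$, which should read $(\sigma_{\rho(e)}\sigma_e^{-1})^i\sigma_e$ since $L_{\rho(e)}=\sigma_{\rho(e)}\sigma_e^{-1}$. So, granted the pivotal identity, your (iii) is correct as you state it.
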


\begin{proof}
  Let $(X,\sigma,\tau)$ be an indecomposable
 solution of multipermutational level at most~$2$
 with an abelian permutation group. Choose~$e\in X$ and let $\rho=\sigma_e$.
 Let $(X,L,\mathbf{R})$ be the $\rho^{-1}$ isotope of~$(X,\sigma,\tau)$.
 By Theorem \ref{th:2persol} this solution is $2$-reductive.
 Therefore
 $\rho$ satisfies Condition \eqref{eq:sigma}, namely
 \[ L_{\rho(x)}\rho L_y=L_{\rho(y)}\rho L_x.\]
 Since all the permutations commute, we obtain
 \[ L_{\rho(x)} L_y=L_{\rho(y)}L_x\]
 and, by substituting $x=\rho^{i-1}(e)$ and $y=e$, we get
 \[ L_{\rho^i(e)} =L_{\rho(e)}L_{\rho^{i-1}(e)}.\]
 Therefore, by an induction on~$i$,
 upwards towards $\infty$ and downwards towards $-\infty$, we get
 \begin{align}\label{eq:rho}
  L_{\rho^i(e)} =L^i_{\rho(e)},
  \end{align}
 for all $i\in\Z$. Hence

 \[\sigma_{\rho^i(e)}=L_{\rho^i(e)}\rho=L_{\rho(e)}^i\rho=(\sigma_{\rho(e)}\sigma_e)^i\sigma_e,
\] proving (3).

 Consider now any $x\in X$.
 The permutation group $\mathcal{G}(X)$ is transitive and therefore
 there exist $k\in\Z$ and $y_1,\ldots,y_k\in X$, such that
  \[x=\sigma_{y_1}\sigma_{y_2}\cdots \sigma_{y_k}(e)=L_{y_1}L_{y_2}\cdots L_{y_k}\rho^k(e).\] Then, by $2$-reductivity
 \[ \sigma_x=L_x\rho = L_{L_{y_1}L_{y_2}\cdots L_{y_k}\rho^k(e)}\rho = L_{\rho^k(e)}\rho=L_{\rho(e)}^k\rho\]
 and therefore the permutation group is generated
 by $\rho=\sigma_e$ and $L_{\rho(e)}=\sigma_{\sigma_e(e)}\sigma_e^{-1}$,
 proving (1).

 Now, suppose $\rho^m=\mathrm{id}$, for some $m\in \Z^+$. Then we have
 \[ \sigma_x^m=(L_{\rho(e)}^k\rho)^m=L_{\rho^{km}(e)}\rho^m=L_e=\mathrm{id} \]
 and therefore $o(\sigma_x) \leq o(\sigma_e)$.
 However, the element~$e$ was chosen arbitrarily, hence
 we can interchange the role of~$x$ and~$e$, obtaining
 (2).
 \end{proof}
As we can see, by Proposition \ref{prop:3}, for each (finite) indecomposable
 solution of multipermutational level at most~$2$ with abelian permutation group -- this group is
  either cyclic or is generated exactly by two generators.

\begin{lemma}\label{lm:4}
 Let $n_1,n_2\in\Z^+$ and $G$ has invariant factors $(n_1,n_2)$.
 Let $a,b\in G$ be such that $o(a)=n_2$.
 Then $n_1\cdot b\in\langle n_1\cdot a\rangle$.
\end{lemma}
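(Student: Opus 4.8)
The plan is to reduce the whole statement to the internal structure of the subgroup $n_1 G = \{n_1\cdot g : g\in G\}$, since both distinguished elements $n_1\cdot a$ and $n_1\cdot b$ live there. By Theorem~\ref{thm:Rotman} the invariant factor hypothesis gives a decomposition $G = C_1\oplus C_2$ with $C_i$ cyclic of order $n_i$ and $n_1\mid n_2$. I first identify $n_1 G$ explicitly: since $n_1$ annihilates every element of $C_1$ we have $n_1 C_1 = 0$, while writing $C_2 = \langle c\rangle$ with $o(c)=n_2$ gives $n_1 C_2 = \langle n_1\cdot c\rangle$, a cyclic group whose order is $n_2/\gcd(n_2,n_1)=n_2/n_1$ (using $n_1\mid n_2$). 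Hence $n_1 G = n_1 C_2$ is cyclic of order exactly $n_2/n_1$.

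The next step is a single order computation for the candidate generator $n_1\cdot a$. By the standard order formula in abelian groups together with $o(a)=n_2$,
\[
o(n_1\cdot a)=\frac{o(a)}{\gcd(o(a),n_1)}=\frac{n_2}{\gcd(n_2,n_1)}=\frac{n_2}{n_1},
\]
where again I use $n_1\mid n_2$. Now the two facts combine at once: $n_1\cdot a$ is an element of the cyclic group $n_1 G$ whose order equals $|n_1 G| = n_2/n_1$, so $\langle n_1\cdot a\rangle = n_1 G$. Since $n_1\cdot b$ is by definition an element of $n_1 G$, we obtain $n_1\cdot b\in\langle n_1\cdot a\rangle$, which is the claim.

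The only genuine content — and the step I would write out most carefully — is the identification of $n_1 G$ as a cyclic group of order precisely $n_2/n_1$; this is exactly where the invariant factor hypothesis is needed, and it is what makes the strong assumption $o(a)=n_2$ (rather than merely $o(a)\mid n_2$) do its work, forcing $n_1\cdot a$ to exhaust $n_1 G$ rather than land in a proper subgroup. Everything after that identification is the one-line order computation above, so I do not anticipate any serious obstacle beyond bookkeeping the decomposition correctly.
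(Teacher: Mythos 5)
Your proof is correct, but it takes a genuinely different route from the paper's. The paper argues in coordinates: it fixes $G=\Z_{n_1}\times\Z_{n_2}$, writes $a=(i,j)$ and $b=(k,l)$, asserts that $o(a)=n_2$ forces $j$ to be coprime to $n_2$, and then exhibits $n_1\cdot b$ explicitly as the multiple $n_1j'l\cdot a$, where $jj'\equiv 1\pmod{n_2}$. You instead pass to the subgroup $n_1G$, compute $|n_1G|=n_2/n_1$ from the invariant-factor decomposition, check $o(n_1\cdot a)=n_2/n_1$ by the standard order formula, and conclude $\langle n_1\cdot a\rangle=n_1G\ni n_1\cdot b$ by counting; note that cyclicity of $n_1G$ is not even needed, since the order count alone forces $\langle n_1\cdot a\rangle=n_1G$. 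Your structural argument buys two things. First, it is uniform: the case $n_1=n_2$ needs no separate treatment, whereas the paper disposes of it as evident before assuming $n_2>n_1$. Second, and more substantially, it sidesteps an inaccuracy in the paper's proof: the implication ``$o(a)=n_2$ hence $\gcd(j,n_2)=1$'' is false as stated --- take $G=\Z_6\times\Z_{12}$ and $a=(2,3)$, whose order is $\mathrm{lcm}(3,4)=12=n_2$ while $\gcd(3,12)=3$, so no inverse $j'$ modulo $n_2$ exists. What does follow from $o(a)=n_2$ (by comparing $p$-adic valuations: any prime dividing $n_2/n_1$ cannot be supplied by the first coordinate, whose order divides $n_1$) is the weaker fact $\gcd(j,n_2/n_1)=1$, and this suffices to repair the paper's computation, since $jj'\equiv 1\pmod{n_2/n_1}$ already gives $n_1jj'\equiv n_1\pmod{n_2}$; your proof avoids the issue entirely. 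What the paper's computation buys in exchange is an explicit multiplier exhibiting $n_1\cdot b$ as a concrete multiple of $a$, whereas your argument is purely existential. Both proofs make visible where finiteness enters (your order count of $n_1G$; the paper's inversion of $j$), consistent with the Remark following the lemma that the statement fails for infinite $G$.
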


\begin{proof}
 Without loss of generality suppose $G= \Z_{n_1}\times \Z_{n_2}$.
 The claim is evident for $n_1=n_2$, suppose hence $n_2> n_1$. Denote $a=(i,j)$ and $b=(k,l)$, for some $i,j,k,l\in\Z$.  The order of~$a$ is~$n_2$ hence
 $j$ is coprime to~$n_2$ and therefore
 there exists $j^{'}\in \Z_{n_2}$ such that $jj^{'}\equiv 1\pmod{n_2}$.
 Hence, $n_1\cdot b=n_1\cdot (k,l)=(0,n_1l)=(0,n_1jj^{'}l)=j^{'}l\cdot(n_1i,n_1j) = n_1j^{'}l\cdot (i,j)=n_1j^{'}l\cdot a$.
\end{proof}

\begin{rem}
Lemma \ref{lm:4} is not true in the infinite case,
even under assumption that $G$ is generated
by~$a$ and $b$.

Suppose, e.g. that $G=\Z_3\times\Z$, $a=(1,3)$ and $b=(0,2)$.
Then $3\cdot b=(0,6)$ is not a~multiple of~$3\cdot a=(0,9)$.
\end{rem}

In Section \ref{ssec:4} we shall continue the discussion on infinite solutions but now we focus on finite ones: we prove that these solutions have exactly the same properties as we have seen in Theorem~\ref{thm:1} and Lemma~\ref{lem:1}.

\begin{theorem}\label{prop:5}
 Let $(X,\sigma,\tau)$ be a finite indecomposable
 solution of multipermutational level at most~$2$
 with an abelian permutation group. Denote $|X|=n$. Then
 \begin{enumerate}
  \item there exist unique numbers~$n_1,n_2\in \Z^+$ such that $n_1 \mid n_2$ and
  $n=n_1\cdot n_2$ and the permutation group  $\mathcal{G}(X)$ has invariant factors $(n_1,n_2)$,
  \item there exists a unique number~$r\in\{0,1,\ldots n_2/n_1-1\}$, such that
  \begin{itemize}
   \item $n_2\mid n_1r^2$,
   \item for each~$x\in X$, $\sigma_{\sigma_{x}(x)}^{n_1}=\sigma_x^{(r+1)n_1}$,
  \end{itemize}
  \item for each~$x\in X$, define $\rho=\sigma_x$
   and $\lambda=\sigma_{\rho(x)}\rho^{-1-r}$; then $o(\rho)=n_2$, $o(\lambda)=n_1$ and $\mathcal{G}(X)=\langle\lambda\rangle\times\langle\rho\rangle$,
  \item for each $x\in X$, $\sigma_{\lambda^i\rho^j(x)} = \lambda^{j-ir}\rho^{1+jr-ir^2}$.
 \end{enumerate}
\end{theorem}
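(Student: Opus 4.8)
The plan is to build on Proposition~\ref{prop:3}, which already gives us the two-generator structure, and on Lemma~\ref{lem:1}, which identifies the key relation $\sigma_{\sigma_x(x)}^{n_1}=\sigma_x^{(r+1)n_1}$ in the model solutions. The strategy is to produce the invariants $n_1,n_2,r$ intrinsically from the abstract solution and then verify they satisfy the required constraints.

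First I would handle part~(1). Since $(X,\sigma,\tau)$ is indecomposable with abelian $\mathcal{G}(X)$, the action is transitive and hence, being abelian, \emph{regular}; therefore $|\mathcal{G}(X)|=|X|=n$. By Proposition~\ref{prop:3}(1), $\mathcal{G}(X)$ is generated by at most two elements, so its invariant factors form a list of length at most two, say $(n_1,n_2)$ with $n_1\mid n_2$ and $n_1 n_2=n$. Uniqueness of invariant factors (Theorem~\ref{thm:Rotman}) gives the uniqueness claim. For part~(3), fix $x\in X$, set $\rho=\sigma_x$ and $\lambda=\sigma_{\rho(x)}\rho^{-1}=L_{\rho(e)}$ in the notation of Proposition~\ref{prop:3}; by that proposition these two elements generate $\mathcal{G}(X)$. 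The main work is to show $o(\rho)=n_2$ (the maximal invariant factor) and that $\langle\lambda\rangle\cap\langle\rho\rangle=\{\mathrm{id}\}$ with $o(\lambda)=n_1$, so that the product is direct. The natural route is to note that $\mathcal{G}(X)=\langle\lambda,\rho\rangle$ has order $n_1 n_2$, so the index $[\mathcal{G}(X):\langle\rho\rangle]$ forces $o(\rho)\geq n_2$; combined with $o(\rho)\mid n_2$ (every element order divides the exponent $n_2$) we get equality. The adjustment $\lambda=\sigma_{\rho(x)}\rho^{-1-r}$ in the statement builds the exponent~$r$ into the definition; I would define $r$ first (below) and then confirm this modified $\lambda$ still has order exactly $n_1$ and still complements $\langle\rho\rangle$.

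Next I would extract $r$ for part~(2). Applying Lemma~\ref{lm:4} with $a$ the image of a generator of order $n_2$ and $b=\sigma_{\rho(x)}\rho^{-1}$, I get that $n_1\cdot b$ lies in $\langle n_1\cdot a\rangle$; written multiplicatively this says $\sigma_{\sigma_x(x)}^{n_1}=\rho^{(r+1)n_1}$ for a well-defined residue $r$ modulo $n_2/n_1$, which I normalize to $r\in\{0,\ldots,n_2/n_1-1\}$. The constraint $n_2\mid n_1 r^2$ should then drop out by iterating the relation $\sigma_{\sigma_x(x)}^{n_1}=\sigma_x^{(r+1)n_1}$ together with part~(3) of Proposition~\ref{prop:3}: computing $\sigma_{\sigma_x^i(x)}$ two different ways and comparing exponents on the $\rho$-component yields a congruence that is exactly $n_1 r^2\equiv 0$. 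I expect \textbf{this congruence to be the main obstacle} — it is where the rigidity of the regular abelian action really bites, and getting the bookkeeping right (especially reconciling the multiplicative group relations with the additive structure of the invariant-factor decomposition, and keeping $n_1$ versus $n_2$ reductions straight) is the delicate part.

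Finally, part~(4) follows by induction once (1)--(3) are in place. Using Proposition~\ref{prop:3}(3) in the form $\sigma_{\rho^j(x)}=\lambda^j\rho^{j(\ldots)+1}$ and the relation $\sigma_{\lambda(y)}$ versus $\sigma_y$ governed by the level-$2$ condition \eqref{eq:2per}, I would first establish the formula for $\sigma_{\lambda^i(x)}$ and $\sigma_{\rho^j(x)}$ separately, then combine them, at each step reducing the exponents using $o(\lambda)=n_1$, $o(\rho)=n_2$ and the defining relation $\lambda^{n_1}=\mathrm{id}$, $\sigma_{\sigma_x(x)}^{n_1}=\sigma_x^{(r+1)n_1}$. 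The substitution $\lambda=\sigma_{\rho(x)}\rho^{-1-r}$ is precisely engineered so that the resulting exponents match the closed form $\sigma_{\lambda^i\rho^j(x)}=\lambda^{j-ir}\rho^{1+jr-ir^2}$; verifying the base cases $(i,j)=(1,0)$ and $(0,1)$ against the definitions, and then checking the inductive step is consistent, completes the argument. This last part is computational but routine once the algebraic skeleton from (1)--(3) is fixed.
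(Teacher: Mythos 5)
Your overall skeleton matches the paper's proof (regularity of the transitive abelian action for part (1), Lemma~\ref{lm:4} to extract $r$, the $2$-reductive isotope relations for the congruence, and the same closed-form computation for part (4)), but there is one genuine gap: your argument that $o(\rho)=n_2$. You claim that ``the index $[\mathcal{G}(X):\langle\rho\rangle]$ forces $o(\rho)\geq n_2$,'' but for an abstract two-generated abelian group with invariant factors $(n_1,n_2)$ this is false: in $\Z_2\times\Z_4$ the elements $\rho=(1,0)$ and $\lambda=(0,1)$ generate the group, yet $o(\rho)=2=n_1<n_2$. The only thing the two-generator structure gives you is that $\mathcal{G}(X)/\langle\rho\rangle$ is cyclic, and its order is bounded by $n_1$ only once you know $\lambda^{n_1}\in\langle\rho\rangle$ --- but that is exactly the relation $L_{\rho(e)}^{n_1}=\rho^{n_1r}$ coming from Lemma~\ref{lm:4}, whose hypothesis $o(a)=n_2$ is precisely the fact you are trying to establish. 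So your ordering (prove $o(\rho)=n_2$ first via the index, extract $r$ afterwards) is circular. The missing ingredient is Proposition~\ref{prop:3}(2): all the $\sigma_x$ share a common order, and since the $\sigma_x$ generate the abelian group $\mathcal{G}(X)$, that common order equals the exponent of $\mathcal{G}(X)$, which is the largest invariant factor $n_2$. This is how the paper gets $o(\rho)=n_2$ \emph{before} invoking Lemma~\ref{lm:4}; with it in place, your remaining steps ($\lambda^{n_1}=L_{\rho(e)}^{n_1}\rho^{-n_1 r}=\mathrm{id}$, then $o(\lambda)=n_1$ because the quotient by $\langle\rho\rangle$ has order $n_1$, hence the product is direct) go through as planned.

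A smaller remark: the congruence $n_2\mid n_1r^2$, which you single out as the main obstacle, is in the paper a one-line consequence of the $2$-reductivity of the isotope (Theorem~\ref{th:2persol}): since $L_{L_x(y)}=L_y$ and $L_{\rho^i(e)}=L^i_{\rho(e)}$ by \eqref{eq:rho}, one gets
\[
\mathrm{id}=L_e=L_{L^{n_1}_{\rho(e)}(e)}=L_{\rho^{n_1r}(e)}=L^{n_1r}_{\rho(e)}=\rho^{n_1r^2}.
\]
Your plan of computing $\sigma$ at $\sigma_x^i(x)$ in two ways amounts to the same mechanism (the point $\rho^{n_1r}(e)=L^{n_1}_{\rho(e)}(e)$ is reached by a word of net $\sigma$-length zero), so no delicate bookkeeping is actually required there. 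Parts (1) and (4) of your proposal coincide with the paper's argument.
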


\begin{proof}
 Let $(X,\sigma,\tau)$ be a finite indecomposable
 solution of multipermutational level at most~$2$
 with an abelian permutation group. According to Proposition~\ref{prop:3}, the permutation group $\mathcal{G}(X)$ is an abelian group generated by permutations of the same order.
 Denote the order by~$n_2$. Actually, the exponent of $\mathcal{G}(X)$ has to be $n_2$ since $\mathcal{G}(X)$ is a $\Z_{n_2}$-module.
 Now, $\mathcal{G}(X)$
 is an abelian group acting transitively on~$X$ and hence we have $|X|=|\mathcal{G}(X)|=n$. This implies that there is $n_1\in \Z^+$ with $n=n_1n_2$ and $n_1\mid n_2$
 and  (1) follows from the fundamental theorem of finite abelian groups (see e.g. \cite[Theorem 6.13]{R}).

 Let $e,x\in X$ and $L_x=\sigma_x\sigma^{-1}_e$. By the end of the proof of Proposition~\ref{prop:3},
 the permutation group $\mathcal{G}(X)$ is generated by (at most) two permutations: $\rho=\sigma_e$ and $L_{\rho(e)}=\sigma_{\sigma_e(e)}\sigma_e^{-1}$.
The permutation
 $\rho$ is an element of order~$n_2$ and therefore,
 according to Lemma~\ref{lm:4}, there exists $r\in \Z$
 such that $L_{\rho(e)}^{n_1}=\rho^{n_1r}$.
 Actually, since $o(\rho)=n_2$, we can take $0\leq r<n_2/n_1$.
 Moreover, by Theorem \ref{th:2persol} we have $L_{L_x(y)}=L_y$, for each $x,y\in X$.
Then, by \eqref{eq:rho}
 \[ \mathrm{id}=L_e=L_{L_{\rho(e)}^{n_1}(e)}=L_{\rho^{n_1r}(e)}=(L_{\rho(e)}^{n_1})^r=
  (\rho^{n_1r})^r=\rho^{n_1r^2}
 \]
 from which we obtain $n_2 \mid n_1r^2$.

Furthermore,
\[
\sigma_e^{n_1r}=\rho^{n_1r}=L_{\rho(e)}^{n_1}=\sigma_{\sigma_e(e)}^{n_1}\sigma_e^{-n_1},
\]
which implies that $\sigma_{\sigma_e(e)}^{n_1}=\sigma_e^{(r+1)n_1}$ proving uniqueness of such~$r$ and therefore (2).

It was shown in the proof of Proposition~\ref{prop:3} that  there exists $k\in \Z$ such that, for each $x\in X$,
$\sigma_x=L_{\rho(e)}^k\rho$.
Let $\lambda=L_{\rho(e)}\rho^{-r}$. Then we have
 \[
 \sigma_x=L_{\rho(e)}^k\rho=(L_{\rho(e)}\rho^{-r})^k\rho^{kr+1}=\lambda^l\rho^{kr+1},
 \]
 which means that  $\mathcal{G}(X)=\langle \rho,\lambda\rangle$ and $o(\lambda)\geq n_1$. Now
 \[ \lambda^{n_1}=(L_{\rho(e)}\rho^{-r})^{n_1}=
 L_{\rho(e)}^{n_1}\rho^{-n_1 r}
 =\rho^{n_1 r}\rho^{-n_1 r}=\mathrm{id}\]
 proving (3).

 Suppose now $x\in X$ is an arbitrary element. Then there exist $i,j\in \Z$ such that $x=\lambda^i\rho^j(e)$.
 By Theorem \ref{th:2persol} and \eqref{eq:rho} we obtain
 \begin{align*}
  &\sigma_x=L_{\lambda^i\rho^j(e)}\rho=
  L_{(L_{\rho(e)}\rho^{-r})^i\rho^j(e)}\rho=L_{\rho^{j-ir}(e)}\rho=
  L_{\rho(e)}^{j-ir}\rho\\
  &
  =(\lambda\rho^{r})^{j-ir}\rho
  =\lambda^{j-ir}\rho^{1+r(j-ir)}=\lambda^{j-ir}\rho^{1+jr-ir^2}
 \end{align*}
which completes the proof.
\end{proof}

Now we are close to the
main result of the paper.
\begin{mthm}\label{cor:sol}
Each finite indecomposable solution of multipermutational level at most~$2$
 with abelian permutation group is isomorphic to $\mathcal{C}(n_1,n_2,r)$, for some $n_1,n_2\in\Z^+$ and $0\leq r<n_2/n_1$.
\end{mthm}

\begin{proof}
Consider a finite indecomposable solution~$(X,\sigma,\tau)$ of multipermutational level at most~$2$
and with abelian permutation group. According to Theorem~\ref{prop:5}, there are coefficients $n_1$, $n_2$ and~$r$ satisfying conditions of Theorem~\ref{thm:1}.
We shall prove $(X,\sigma,\tau)\cong \mathcal{C}(n_1,n_2,r)$.

Let us choose $e\in X$ and denote $\rho=\sigma_e$ and $\lambda=\sigma_{\rho(e)}\rho^{-r-1}$.
We define $\varphi:\mathcal{C}(n_1,n_2,r)\to (X,\sigma,\tau)$ as follows:
\[\varphi ((a,i)) = \lambda^a\rho^i(e).\]
The mapping is a well defined bijection, according to Theorem~\ref{prop:5}~(3).
 Then by Theorem \ref{prop:5}~(4)
 \begin{multline*}
  \varphi(\sigma_{(a,i)}(b,j))=
  \varphi(b-ar+i,j+ir-ar^2+1)
  =\lambda^{b-ar+i}\rho^{j+ir-ar^2+1}(e)=\\
  =\lambda^{-ar+i}\rho^{ir-ar^2+1}\lambda^b\rho^j(e)
  =\sigma_{\lambda^a\rho^i(e)} \lambda^b\rho^j(e)
  =\sigma_{\varphi((a,i))}\varphi((b,j))
  \end{multline*}
 and therefore $\varphi$ is an isomorphism.
\end{proof}

\begin{theorem}\label{thm:7}
 Let $n\in\Z^+$ and let $k$ be the largest integer, such that $k^2$ divides~$n$.
 Then there exist
  $\sum_{d \mid k}\limits \dfrac{k}{d}$
 indecomposable solutions with $n$ elements and of multipermutational level at most~$2$ with its permutation group abelian.
\end{theorem}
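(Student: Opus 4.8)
The plan is to turn this into a purely arithmetic counting problem using the classification already in hand. By Main Theorem~\ref{cor:sol} every finite indecomposable solution of multipermutational level at most~$2$ with abelian permutation group is isomorphic to some $\mathcal{C}(n_1,n_2,r)$, and by Proposition~\ref{prop:6} two admissible triples yield isomorphic solutions only when they coincide. Hence the quantity to be computed is the number of triples $(n_1,n_2,r)$ subject to the constraints of Theorem~\ref{thm:1} together with the size requirement, that is: $n_1 n_2 = n$, $n_1 \mid n_2$, $n_2 \mid n_1 r^2$ and $0 \le r < n_2/n_1$.

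First I would reparametrize by setting $m = n_2/n_1 \in \Z^+$. The relations $n_1 \mid n_2$ and $n_1 n_2 = n$ become $n = n_1^2 m$ with $m = n/n_1^2$, so the admissible pairs $(n_1,n_2)$ are in bijection with the integers $n_1$ for which $n_1^2 \mid n$. The elementary fact I would isolate here is that, for every $t \in \Z^+$, one has $t^2 \mid n$ if and only if $t \mid k$; this is immediate by comparing $p$-adic valuations, since $2\,v_p(t) \le v_p(n)$ is equivalent to $v_p(t) \le \lfloor v_p(n)/2 \rfloor$ and $k = \prod_p p^{\lfloor v_p(n)/2\rfloor}$. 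Thus $n_1$ runs exactly through the divisors of $k$.

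Next, for a fixed $n_1$ (equivalently fixed $m = n/n_1^2$) I would count the admissible $r$. Since $n_2 = n_1 m$, the condition $n_2 \mid n_1 r^2$ is equivalent to $m \mid r^2$, so I must evaluate $f(m) := \bigl|\{\, r : 0 \le r < m,\ m \mid r^2 \,\}\bigr|$. By the Chinese Remainder Theorem this is multiplicative in $m$, and for a prime power $m = p^e$ the divisibility $p^e \mid r^2$ is equivalent to $p^{\lceil e/2\rceil} \mid r$, which yields $f(p^e) = p^{\lfloor e/2\rfloor}$. Consequently $f(m)$ is the largest integer $\ell$ with $\ell^2 \mid m$.

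Finally I would assemble the total. Because $m = n/n_1^2$, the largest $\ell$ with $\ell^2 \mid m$ is the largest $\ell$ with $(\ell n_1)^2 \mid n$, which by the equivalence $t^2 \mid n \Leftrightarrow t \mid k$ noted above is the largest $\ell$ with $\ell n_1 \mid k$, namely $\ell = k/n_1$ (an integer, as $n_1 \mid k$). Summing over the admissible $n_1$ then gives $\sum_{n_1 \mid k} f(n/n_1^2) = \sum_{n_1 \mid k} k/n_1 = \sum_{d \mid k} k/d$, as asserted. I expect no genuine obstacle: the whole argument rests on the two elementary valuation facts (the equivalence $t^2 \mid n \Leftrightarrow t \mid k$ and the evaluation of $f(p^e)$), after which the divisor sum falls out by reindexing.
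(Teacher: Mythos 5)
Your proof is correct, and it follows the same overall skeleton as the paper's: Main Theorem~\ref{cor:sol} together with Proposition~\ref{prop:6} (which the paper uses only implicitly, and which you rightly cite explicitly, since surjectivity of the classification alone would only give an upper bound) reduces the statement to counting triples $(n_1,n_2,r)$; the admissible $n_1$ are exactly the divisors of~$k$, via $t^2\mid n \Leftrightarrow t\mid k$; and the total is $\sum_{n_1\mid k} k/n_1$. The one genuine divergence is how the inner count of admissible $r$ is evaluated. The paper argues globally: setting $l=n/k$, it uses the fact that $l^2$ is the smallest square that is a multiple of~$n$ to run the chain $n_2\mid n_1r^2 \Leftrightarrow n\mid (n_1r)^2 \Leftrightarrow l^2\mid (n_1r)^2 \Leftrightarrow (l/n_1)\mid r$, thereby exhibiting the admissible $r$ explicitly as the $n_2/l=k/n_1$ multiples of $l/n_1$ in the range. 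You instead reduce the condition to $m\mid r^2$ with $m=n/n_1^2$ and evaluate $f(m)=|\{\,r: 0\le r<m,\ m\mid r^2\,\}|$ locally, by CRT-multiplicativity and the prime-power computation $f(p^e)=p^{\lfloor e/2\rfloor}$; the two are equivalent, since your modulus $\prod_p p^{\lceil v_p(m)/2\rceil}$ equals the paper's $l/n_1$. Your local route is slightly longer but buys something concrete: specialized to $n_1=1$ it gives a complete proof of the cyclic-case count in Corollary~\ref{thm:11}, whose proof in the paper verifies only that the $k$ listed values of $r$ belong to the set, leaving the reverse inclusion to be extracted from the equivalence chain in the proof of Theorem~\ref{thm:7}.
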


\begin{proof}
By Theorem \ref{cor:sol}, each finite indecomposable solution of multipermutational level at most~$2$
 with abelian permutation group is isomorphic to $\mathcal{C}(n_1,n_2,r)$, for some $n_1,n_2,r$ specified in Theorem \ref{thm:1}. Then
 it remains to count how many solutions are there,
 for each factorization $n=n_1\cdot n_2$
 with $n_1\mid n_2$.

 Let $l=n/k$.
 Clearly $n_1\mid k\mid l\mid n_2$.
 Suppose that $n$, $k$ and $l$ are fixed and we shall count the number of possible configurations $n_1$, $n_2$ and $r$.
 Whenever $n_1$ is a divisor of~$k$,
 and $0\leq r< n_2/n_1$,
 we have
 \[
n_2\mid n_1r^2\quad \Leftrightarrow\quad n\mid (n_1r)^2\quad \Leftrightarrow\quad l^2\mid (n_1r)^2\quad \Leftrightarrow\quad
l\mid n_1r\quad \Leftrightarrow\quad l/n_1 \mid r,
  \]
where we used that $l^2$ is the smallest square which is a multiple of~$n$.
Hence, suitable choices for $r$ are elements of the set $\{0,l/n_1,2l/n_1,\cdots,(n_2-l)/n_1\}$. The cardinality
 of this set is $n_2/l$ which is equal to $k/n_1$.
\end{proof}

Recall, by Proposition \ref{prop:3}, an abelian permutation group of a finite indecomposable
 solution $(X,\sigma,\tau)$ of multipermutational level at most~$2$,
 is generated by at most two elements. In the case when it is generated by only one element, i.e. the group is cyclic, by Theorem \ref{prop:5} we immediately obtain that the solution is isomorphic to $\mathcal{C}(1,n,r)$, for arbitrary $n\in \Z^+$ and some $r\in \{0,1,\ldots,n-1\}$ and, for all $x\in \Z_1\times \Z_n$,
 \[\sigma_{\sigma_{x}^i(x)}=\sigma_x^{ri+1}.\]

The next corollary directly follows by Proposition \ref{prop:6}.

\begin{cor}\label{prop:10}
 Let $(X,\sigma,\tau)=\mathcal{C}(1,n,r)$ and $(X',\sigma',\tau')=\mathcal{C}(1,m,s)$ be finite indecomposable
 solutions of multipermutational level at most~$2$ with cyclic permutation groups.
 Then they are isomorphic if and only if $n=m$
 and $r=s$.
\end{cor}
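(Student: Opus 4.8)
The plan is to derive this corollary directly from Proposition~\ref{prop:6}, which already establishes the isomorphism criterion for general solutions of the form $\mathcal{C}(n_1,n_2,r)$. The key observation is that the cyclic case is simply the specialization where the first invariant factor equals $1$. First I would note that a solution $\mathcal{C}(n_1,n_2,r)$ has cyclic permutation group precisely when $n_1=1$: indeed, by Theorem~\ref{thm:1} the permutation group is isomorphic to $\Z_{n_1}\times\Z_{n_2}$, which is cyclic if and only if $n_1=1$ (recalling that $n_1\mid n_2$, so $\gcd(n_1,n_2)=n_1$, and the product is cyclic exactly when one factor is trivial). Thus both solutions in the statement, being written as $\mathcal{C}(1,n,r)$ and $\mathcal{C}(1,m,s)$, already sit in the correct normal form.

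The main step is then to invoke Proposition~\ref{prop:6} with the parameters $(n_1,n_2,r)=(1,n,r)$ and $(m_1,m_2,s)=(1,m,s)$. That proposition asserts that $\mathcal{C}(n_1,n_2,r)\cong\mathcal{C}(m_1,m_2,s)$ if and only if $n_1=m_1$, $n_2=m_2$ and $r=s$. Substituting, the first coordinates agree automatically ($n_1=m_1=1$), so the isomorphism holds if and only if $n_2=m_2$ and $r=s$, that is, $n=m$ and $r=s$. This is exactly the claim.

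One should double-check the admissibility of the parameters so that Proposition~\ref{prop:6} genuinely applies. For $\mathcal{C}(1,n,r)$ the constraint from Theorem~\ref{thm:1} reads $0\leq r<n_2/n_1=n$ and $n_2\mid n_1 r^2$, i.e.\ $n\mid r^2$; both solutions in the statement are assumed to arise as genuine indecomposable solutions of the required type, so these conditions are met, and the ranges $0\leq r<n$, $0\leq s<m$ are precisely those for which Proposition~\ref{prop:6} forces equality rather than mere congruence. I would also remark that this corollary is consistent with Theorem~\ref{prop:5}, which guarantees that every finite indecomposable solution with cyclic permutation group is isomorphic to some $\mathcal{C}(1,n,r)$, so the corollary completely classifies this cyclic subfamily up to isomorphism.

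I do not anticipate a real obstacle here, since the content is entirely contained in Proposition~\ref{prop:6}; the only care needed is the elementary group-theoretic remark that $\Z_{n_1}\times\Z_{n_2}$ with $n_1\mid n_2$ is cyclic if and only if $n_1=1$, which justifies that the cyclic hypothesis forces the first parameter to be $1$ and hence lands us in the scope of the earlier proposition.
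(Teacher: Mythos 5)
Your proposal is correct and matches the paper exactly: the paper gives no separate argument, stating only that the corollary ``directly follows by Proposition~\ref{prop:6}'', which is precisely your specialization to $n_1=m_1=1$. Your additional checks (the cyclicity of $\Z_{n_1}\times\Z_{n_2}$ forcing $n_1=1$, and the admissibility of the parameter ranges) are sound and merely make explicit what the paper leaves implicit.
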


The analogue of Theorem \ref{thm:7} for solutions with cyclic permutation group is the following.

\begin{cor}\label{thm:11}
 Let $n\in\mathbb{N}$. The quantity of all indecomposable
 solutions of size $n$ of multipermutational level at most~$2$ with cyclic permutation group is equal to the largest number~$k$, such that $k^2$ divides~$n$.
\end{cor}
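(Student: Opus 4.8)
The plan is to count the cyclic-permutation-group solutions by specializing the general counting argument from Theorem~\ref{thm:7} to the case $n_1=1$. By Corollary~\ref{prop:10}, the isomorphism classes of indecomposable solutions of size~$n$ of multipermutational level at most~$2$ with cyclic permutation group are in bijection with the admissible parameters of the form $\mathcal{C}(1,n,r)$, so I only need to count the admissible values of~$r$.

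First I would recall from the discussion preceding Corollary~\ref{prop:10} that a cyclic permutation group forces $n_1=1$ and $n_2=n$. Thus the only freedom is in the choice of $r\in\{0,1,\ldots,n-1\}$ subject to the divisibility constraint $n_2\mid n_1 r^2$ from Theorem~\ref{thm:1}, which in this case reads $n\mid r^2$. So the quantity to be computed is simply
\[
|\{r\in\{0,1,\ldots,n-1\}\colon n\mid r^2\}|.
\]

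Next I would carry out this count directly, reusing the key number-theoretic fact already exploited in the proof of Theorem~\ref{thm:7}: if $k$ is the largest integer with $k^2\mid n$ and $l=n/k$, then $l^2$ is the smallest square that is a multiple of~$n$. Hence $n\mid r^2$ is equivalent to $l^2\mid r^2$, which is equivalent to $l\mid r$. The multiples of~$l$ in the range $\{0,1,\ldots,n-1\}$ are exactly $\{0,l,2l,\ldots,(n-l)\}$, a set of cardinality $n/l=k$. This yields the desired count of~$k$.

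The argument is essentially a one-parameter restriction of Theorem~\ref{thm:7}, so there is no serious obstacle; indeed one can also derive the result formally by substituting $n_1=1$ (so that $k/n_1=k$) into the per-factorization count $k/n_1$ established in that proof, with the single factorization $n=1\cdot n$ being the only one compatible with a cyclic group. The only point requiring a modicum of care is the justification of the equivalence $n\mid r^2 \Leftrightarrow l\mid r$, but this is precisely the minimality property of~$l^2$ that was already used, so I would simply invoke it.
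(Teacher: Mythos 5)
Your proposal is correct and follows essentially the same route as the paper: both reduce, via the classification $\mathcal{C}(1,n,r)$ and Corollary~\ref{prop:10}, to counting the set $\{r \mid 0\le r<n,\ n\mid r^2\}$, and both count it using the factorization $n=k\cdot l$. If anything, you are slightly more explicit than the paper, whose proof only exhibits the $k$ multiples of~$l$ as admissible values (via $k\mid l$) and leaves the converse implication (that $n\mid r^2$ forces $l\mid r$) implicit, whereas you invoke the minimality of $l^2$ from the proof of Theorem~\ref{thm:7} to establish the equivalence in both directions.
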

\begin{proof}
 It is sufficient to prove that the size of the set
 $ \{r\ |\ 0\leq r<n \ \& \ n \mid r^2\}$ is equal to~$k$.
 Let $n=k\cdot l$. Then $k$ divides~$l$ and therefore
 $n$ divides $(i\cdot l)^2$, for each $0\leq i< k$, which completes the proof.
\end{proof}
\begin{example}\cite[Theorem 16]{CPR}
Let $p$ be a prime number. By Corollary \ref{thm:11}, there are $p$ indecomposable solutions of cardinality $p^2$ of multipermutational level at most $2$ with cyclic permutation group.
\end{example}
\begin{example}
Let $p$ be a prime number. By Theorem \ref{thm:7} we have
\[
\sum_{d \mid p^2}\limits \dfrac{p^2}{d}=p^2+p+1
\]
indecomposable solutions with $p^4$ elements of multipermutational level at most~$2$ and with abelian permutation group; among them, exactly $p^2$ have a cyclic permutation group,
by Corollary \ref{thm:11}.

 For instance, for $p=2$, there are 7 such solutions:
\begin{itemize}
\item For $n_1=1$ and $n_2=16$, the possible choices for $r$ are: $0$, $4$, $8$ or $12$. Hence, we have four solutions:
\begin{itemize}
\item [$\bullet$] $\mathcal{C}(1,16,0)=(\Z_{16},\sigma,\tau)$, with $\sigma_{i}(j)=j+1$;
\item [$\bullet$] $\mathcal{C}(1,16,4)=(\Z_{16},\sigma,\tau)$, with $\sigma_{i}(j)=j+4i+1$;
\item [$\bullet$] $\mathcal{C}(1,16,8)=(\Z_{16},\sigma,\tau)$, with $\sigma_{i}(j)=j+8i+1$;
\item [$\bullet$] $\mathcal{C}(1,16,12)=(\Z_{16},\sigma,\tau)$, with $\sigma_{i}(j)=j+12i+1$.
\end{itemize}
In this case, all the permutation groups $\mathcal{G}(\Z_{16})$ are cyclic.
\vskip 1mm
\item For $n_1=2$ and $n_2=8$, possible choices for $r$ are $0$ or $2$. Hence, we have two solutions:
\begin{itemize}
\item [$\bullet$] $\mathcal{C}(2,8,0)=(\Z_{2}\times \Z_8,\sigma,\tau)$, with $\sigma_{(a,i)}((b,j))=(b+i,j+1)$;
\item [$\bullet$] $\mathcal{C}(2,8,2)=(\Z_{2}\times \Z_8,\sigma,\tau)$, with $\sigma_{(a,i)}((b,j))=(b-2a+i,j+2i-4a+1)$.
\end{itemize}
\vskip 1mm
\item For $n_1=n_2=4$, the only possible choice for $r$ is $0$. Hence, we have one solution:
\begin{itemize}
\item [$\bullet$] $\mathcal{C}(4,4,0)=(\Z_{4}\times \Z_4,\sigma,\tau)$, with $\sigma_{(a,i)}((b,j))=(b+i,j+1)$.
\end{itemize}
\end{itemize}
\end{example}

\section{Automorphism group}\label{ssec3}

In this section we compute some properties of the automorphism group of the constructed solutions
and we give some examples.

\begin{proposition}\label{prop:2}
 Let $(X,\sigma,\tau)=\mathcal{C}(n_1,n_2,r)$, for some $n_1,n_2,r\in \Z^+$, be a finite indecomposable
 solution of multipermutational level at most~$2$ with abelian permutation group. Then
 every endomorphism of the solution ~$(X,\sigma,\tau)$ is an automorphism and
 the automorphism group $\aut{X}$ of~$(X,\sigma,\tau)$ is a regular abelian group.
\end{proposition}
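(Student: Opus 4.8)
The plan is to study the evaluation map $\mathrm{End}(X,\sigma,\tau) \to X$, $\varphi \mapsto \varphi((0,0))$, and to show it is a bijection onto a regular abelian group. First I would record that transitivity of $\aut X$ is already available for free: applying the construction in the proof of Main Theorem~\ref{cor:sol} to the solution $\mathcal{C}(n_1,n_2,r)$ itself, for each $e \in X$ one obtains an \emph{automorphism} $\psi_e$ of $(X,\sigma,\tau)$ given by $\psi_e((a,i)) = \lambda_e^a \rho_e^i(e)$, where $\rho_e = \sigma_e$ and $\lambda_e = \sigma_{\rho_e(e)}\rho_e^{-1-r}$; in particular $\psi_e((0,0)) = e$, so the evaluation map is onto and $\aut X$ acts transitively on $X$.

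Next I would prove that an endomorphism is determined by the image of a single point. Using the defining identity $\varphi\sigma_x = \sigma_{\varphi(x)}\varphi$ together with $\mathcal{G}(X) = \langle \lambda,\rho\rangle$ and the word $(a,i) = \sigma_{(0,1)}^a\,\sigma_{(0,0)}^{\,i-(1+r)a}((0,0))$ coming from Theorem~\ref{prop:5}(3), a straightforward induction gives $\varphi((a,i)) = \sigma_{\varphi((0,1))}^a\,\sigma_{\varphi((0,0))}^{\,i-(1+r)a}(\varphi((0,0)))$, where $\varphi((0,1)) = \sigma_{\varphi((0,0))}(\varphi((0,0)))$. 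Thus $\varphi$ is completely determined by $\varphi((0,0))$, so the evaluation map is injective and $|\mathrm{End}(X)| \le |X|$. Since the maps $\psi_e$ already supply $|X|$ distinct automorphisms, I would conclude $\mathrm{End}(X) = \aut X$ (every endomorphism is an automorphism) and that $\aut X$ acts regularly on $X$.

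It then remains to verify commutativity. Carrying out the power computations of Lemma~\ref{lem:1} inside the displayed formula yields the explicit affine shape of the automorphism $\psi_{(c,k)}$ sending $(0,0)$ to $(c,k)$, namely $\psi_{(c,k)}((a,i)) = (c,k) + M_\delta\cdot(a,i)$, where $\delta = k - cr$ and $M_\delta = I + \delta N$ with $N = \left(\begin{smallmatrix} -r & 1 \\ -r^2 & r \end{smallmatrix}\right)$, regarded as an endomorphism of $\Z_{n_1}\times\Z_{n_2}$ (well defined precisely because $n_1 \mid n_2$ and $n_2 \mid n_1 r^2$). The two identities $N^2 = 0$ and $N\cdot(1,r)^{\mathsf T} = 0$ are immediate. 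The first gives $M_\delta M_{\delta'} = M_{\delta+\delta'}$, so composing two such maps produces the affine map with linear part $M_{\delta+\delta'}$ and translation part $(c,k) + M_\delta(c',k')$; comparing with the opposite order, commutativity reduces to $N\big(\delta(c',k') - \delta'(c,k)\big) = 0$. A short computation gives $\delta(c',k') - \delta'(c,k) = (kc'-k'c)\cdot(1,r)$, which the second identity annihilates. Hence $\aut X$ is abelian, completing the proof.

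The conceptual heart is the determinacy argument of the second paragraph. The main computational obstacle is the explicit affine formula for $\psi_{(c,k)}$, together with checking that $N$ (and hence each $M_\delta$) is well defined modulo the two different moduli — this is exactly where the hypotheses $n_1\mid n_2$ and $n_2\mid n_1r^2$ enter — followed by the routine verifications of $N^2=0$ and the commutator cancellation.
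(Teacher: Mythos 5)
Your proposal is correct, and while it shares the paper's determinacy core, it reorganizes the rest of the argument in a genuinely different way. Like the paper, you show an endomorphism is pinned down by its value at $(0,0)$ via the word $(a,i)=\sigma_{(0,1)}^a\sigma_{(0,0)}^{\,i-(1+r)a}((0,0))$ (the paper writes the exponent as $\delta_{(a,i)}-a$ with $\delta_{(a,i)}=i-ar$). After that the routes diverge. The paper writes down the candidate maps $f_{(s,t)}$, verifies the homomorphism property by a direct coordinate computation hinging on the identity $\delta_{f_{(s,t)}((a,i))}=\delta_{(s,t)}+\delta_{(a,i)}$, proves commutativity by checking the composed formula is symmetric in $(s,t)\leftrightarrow(u,v)$, and exhibits the explicit inverse $f_{(s,t)}^{-1}=f_{(\delta_{(s,t)}^2-s,\,r\delta_{(s,t)}^2-t)}$, so that ``endomorphism $=$ automorphism'' needs no counting. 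You instead (i) get surjectivity of the evaluation map for free by recycling the isomorphism of Main Theorem~\ref{cor:sol} based at $e$ --- legitimate, but you should make explicit that the parameters Theorem~\ref{prop:5} extracts from $\mathcal{C}(n_1,n_2,r)$ are again $(n_1,n_2,r)$, which is what guarantees $\psi_e$ lands back in the \emph{same} solution; this is supplied by Lemma~\ref{lem:1} together with the uniqueness clauses of Theorem~\ref{prop:5} (equivalently, Proposition~\ref{prop:6}); (ii) deduce $\mathrm{End}(X)=\aut{X}$ and regularity by a cardinality argument; and (iii) compress the commutativity check into the nilpotency formalism $M_\delta=I+\delta N$, $N^2=0$, $N(1,r)^{\mathsf T}=0$. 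I checked the details: your affine formula for $\psi_{(c,k)}$ coincides with the paper's $f_{(s,t)}((a,i))=(s+a+\delta_{(a,i)}\delta_{(s,t)},\,t+i+r\delta_{(a,i)}\delta_{(s,t)})$, the relation $M_\delta M_{\delta'}=M_{\delta+\delta'}$ is exactly the paper's key identity $\delta_{f((a,i))}=\delta_{(s,t)}+\delta_{(a,i)}$ in disguise, and the commutator computation $\delta(c',k')-\delta'(c,k)=(kc'-k'c)\cdot(1,r)$ is correct (and since $N(1,r)^{\mathsf T}=0$ holds entrywise over $\Z$, no well-definedness issue arises there). What each approach buys: yours eliminates the paper's longest computation (the homomorphism verification of $f_{(s,t)}$) and makes the algebraic reason for commutativity transparent, at the cost of leaning on the Main Theorem and on finiteness; the paper's hands-on version is self-contained and its explicit inverse is precisely what survives when counting fails --- note that your step (ii) is intrinsically finite, and it is exactly the transitivity-implies-regularity argument that the paper has to replace by a different observation in the infinite case of Theorem~\ref{thm:8}.
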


\begin{proof}
By the proof of Theorem \ref{thm:1}, for $(a,i), (b,j)\in \Z_{n_1}\times \Z_{n_2}$,
\[\sigma_{(a,i)}((b,j))=(b-ar+i,j+ir-ar^2+1)=(b+\delta_{(a,i)},j+r\delta_{(a,i)}+1),\]
where $\delta_{(a,i)}=i-ar$.

Notice that
 \[
 \sigma_{(a,i)}^k((b,j))=(b+k\delta_{(a,i)},j+k(r\delta_{(a,i)}+1))
 \]
and
\[
 \sigma_{(0,1)}^a\sigma_{(0,0)}^{\delta_{(a,i)}-a} ((b,j))=(b+a,j+i).\]

 Let~$f$ be an endomorphism of~$(X,\sigma,\tau)$.
 Denote $f((0,0))=(s,t)$. We prove that~$f$ is already determined by~$s$ and~$t$. Indeed,
 \[ f((0,1))=f(\sigma_{(0,0)}(0,0))
  =\sigma_{(s,t)}(s,t)=(s+\delta_{(s,t)},t+r\delta_{(s,t)}+1).
 \]
For each $(a,i)\in \Z_{n_1}\times \Z_{n_2}$,
 \begin{align*}
  f((a,i)) &= f(\sigma_{(0,1)}^a\sigma^{\delta_{(a,i)}-a}_{(0,0)}((0,0)))
  =\sigma_{(s+\delta_{(s,t)},t+r\delta_{(s,t)}+1)}^a\sigma^{\delta_{(a,i)}-a}_{(s,t)}((s,t))\\
  &=\sigma_{(s+\delta_{(s,t)},t+r\delta_{(s,t)}+1)}^a (s+(\delta_{(a,i)}-a)\cdot \delta_{(s,t)},t+(\delta_{(a,i)}-a)\cdot(r\delta_{(s,t)}+1))\\
  &=(s+(\delta_{(a,i)}-a)\cdot \delta_{(s,t)}+a\cdot(t+r\delta_{(s,t)}+1-(s+\delta_{(s,t)})r),\\
  &\strut\kern2cm t+(\delta_{(a,i)}-a)\cdot(r\delta_{(s,t)}+1)+a\cdot((t+r\delta_{(s,t)}+1)r-(s+\delta_{(s,t)})r^2+1))\\
  &=(s+(\delta_{(a,i)}-a)\cdot \delta_{(s,t)}+a\cdot(\delta_{(s,t)}+1),
  t+(\delta_{(a,i)}-a)\cdot(r\delta_{(s,t)}+1)+a\cdot(r\delta_{(s,t)}+r+1))\\
  &=(s+a+\delta_{(a,i)}\delta_{(s,t)},t+i+r\cdot\delta_{(a,i)}\delta_{(s,t)}).
 \end{align*}

Let now $f_{(s,t)}$ be the mapping that
sends $(a,i)$ to $(s+a+\delta_{(a,i)}\delta_{(s,t)},t+i+r\cdot\delta_{(a,i)}\delta_{(s,t)})$.
We shall prove that $f_{(s,t)}$ is a~homomorphism.
\begin{align*}
 f_{(s,t)} (\sigma_{(a,i)} ((b,j))) &=
 f_{(s,t)} ((b+\delta_{(a,i)},j+r\delta_{(a,i)}+1))\\
 &=(s+b+\delta_{(a,i)}+(j+r\delta_{(a,i)}+1-r(b+\delta_{(a,i)}))\delta_{(s,t)},\\
 &\strut\kern2cm t+j+r\delta_{(a,i)}+1+r(j+r\delta_{(a,i)}+1-r(b+\delta_{(a,i)}))\delta_{(s,t)})\\
 &=(s+b+\delta_{(a,i)}+(\delta_{(b,j)}+1)\delta_{(s,t)}, t+j+r\delta_{(a,i)}+1+r(\delta_{(b,j)}+1)\delta_{(s,t)})\\
 \sigma_{f_{(s,t)}((a,i))} (f_{(s,t)}((b,j))) &=
 \sigma_{(s+a+\delta_{(a,i)}\delta_{(s,t)},t+i+r\cdot\delta_{(a,i)}\delta_{(s,t)})}
 ( (s+b+\delta_{(b,j)}\delta_{(s,t)},t+j+r\cdot\delta_{(b,j)}\delta_{(s,t)}) )\\
 &= (s+b+\delta_{(b,j)}\delta_{(s,t)}+t+i+r\cdot\delta_{(a,i)}\delta_{(s,t)}-r(s+a+\delta_{(a,i)}\delta_{(s,t)}),\\
 &\strut\kern12mm t+j+r\cdot\delta_{(b,j)}\delta_{(s,t)}+r(t+i+r\cdot\delta_{(a,i)}\delta_{(s,t)})) -r^2(s+a+\delta_{(a,i)}\delta_{(s,t)})+1)\\
 &= (s+b+\delta_{(b,j)}\delta_{(s,t)}+\delta_{(s,t)}+\delta_{(a,i)},
 t+j+r(\delta_{(b,j)}\delta_{(s,t)}+\delta_{(s,t)}+\delta_{(a,i)})+1).
\end{align*}

Then we notice that
\[\delta_{f_{(s,t)}((a,i))}=t+i+r\delta_{(a,i)}\delta_{(s,t)}-r\cdot(s+a+\delta_{(a,i)}\delta_{(s,t)})
 = \delta_{(s,t)}+\delta_{(a,i)}.
\]
Now we shall prove that all the endomorphisms
of~$(X,\sigma,\tau)$ commute:
\begin{multline*}
 f_{(s,t)}f_{(u,v)}((a,i))=
 f_{(s,t)} ((u+a+\delta_{(a,i)}\delta_{(u,v)},v+i+r\delta_{(a,i)}\delta_{(u,v)})\\
 =(s+u+a+\delta_{(a,i)}\delta_{(u,v)}+\delta_{f_{(u,v)}((a,i))}\delta_{(s,t)},
  t+v+i+r\delta_{(a,i)}\delta_{(u,v)}+r\delta_{f_{(u,v)}((a,i))}\delta_{(s,t)})\\
 =(s+u+a+\delta_{(a,i)}\delta_{(u,v)}+\delta_{(u,v)}\delta_{(s,t)}+\delta_{(a,i)}\delta_{(s,t)},
  t+v+i+r(\delta_{(a,i)}\delta_{(u,v)}+\delta_{(u,v)}\delta_{(s,t)}+\delta_{(a,i)}\delta_{(s,t)}))
\end{multline*}
and this expression is symmetric with respect to the exchange $(s,t)\leftrightarrow (u,v)$.

Finally, we observe that $f_{(s,t)}^{-1}=f_{(\delta_{(s,t)}^2-s,r\delta_{(s,t)}^2-t)}$. Indeed,
\[ \delta_{(\delta_{(s,t)}^2-s,r\delta_{(s,t)}^2-t)}=\delta_{(-s,-t)}=-\delta_{(s,t)}\]
and the rest follows by substituting $(u,v)$
by $(\delta_{(s,t)}^2-s,r\delta_{(s,t)}^2-t)$
in the previous computation.
Hence $f_{(s,t)}$ is an automorphism of~$(X,\sigma,\tau)$ and
$\aut{X}$ is an abelian transitive group.

\end{proof}

Computing the exact structure of the automorphism group is more complicated and therefore we do not do it here.
We just want to give some examples to show that this group need not be isomorphic to the permutation group.

\begin{example}
Let $p$ be a prime number. By Theorem \ref{thm:7} there are $p+1$
indecomposable solutions with $p^2$ elements of multipermutational level at most~$2$ and with abelian permutation group. If $p=2$ we have three such solutions:
\begin{itemize}
\item [$\bullet$] $\mathcal{C}(1,4,0)=(\Z_{4},\sigma,\tau)$, with $\sigma_{i}(j)=j+1$;
\item [$\bullet$] $\mathcal{C}(1,4,2)=(\Z_{4},\sigma,\tau)$, with $\sigma_{i}(j)=j+2i+1$;
\item [$\bullet$] $\mathcal{C}(2,2,0)=(\Z_{2}\times \Z_2,\sigma,\tau)$, with $\sigma_{(a,i)}((b,j))=(b+i,j+1)$.
\end{itemize}

For the first solution $(\Z_{4},\sigma,\tau)=\mathcal{C}(1,4,0)$, $\sigma_{0}=\sigma_{1}=\sigma_2=\sigma_3=(0123)$ and the permutation group $\mathcal{G}(\Z_{4})=\langle(0123)\rangle\cong\Z_4$. In this case the automorphism group $\aut{\Z_{4}}$ is the same.

For the second  solution $(\Z_{4},\sigma,\tau)=\mathcal{C}(1,4,2)$, we have $\sigma_{0}=\sigma_{2}=(0123)$ and $\sigma_1=\sigma_3=(3210)$ and also $\mathcal{G}(\Z_{4})=\langle(0123)\rangle\cong\Z_4$. But now the automorphism group
 is
 \[\aut{\Z_{4}}=\{\mathrm{id},(01)(23),(02)(13),(03)(12)\}\cong\Z_2\times \Z_2.\]

Finally, for the third  solution $(\Z_{2}\times \Z_2,\sigma,\tau)=\mathcal{C}(2,2,0)$:
$\sigma_{(0,0)}=\sigma_{(1,0)}=((0,0)\ (0,1))((1,0)\ (1,1))$, $\sigma_{(0,1)}=\sigma_{(1,1)}=((0,0)\ (1,1))((0,1)\ (1,0))$ and $\mathcal{G}(\Z_{2}\times \Z_2)\cong\Z_2\times \Z_2$.
 The automorphism group is
 \[
 \aut{\Z_{2}\times \Z_2}=\{\mathrm{id},((0,0)\ (0,1)\ (1,0)\ (1,1)), ((0,0)\ (1,0))((0,1)\ (1,1)), ((0,0)\ (1,1)\ (1,0)\ (0,1))\}\cong\Z_4.\]
\end{example}

Computations of $\aut{X}$ simplify in the case of cyclic permutation groups.

\begin{proposition}\label{prop:12}
Let $(X,\sigma,\tau)=\mathcal{C}(1,n,r)$ be a finite indecomposable
 solution of multipermutational level at most~$2$ with a cyclic permutation group.
The automorphism group of~$(X,\sigma,\tau)$ is not cyclic if and only if $n\equiv 0\pmod 4$ and $r\equiv 2\pmod 4$.
\end{proposition}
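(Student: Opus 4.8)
The plan is to make $\aut{X}$ completely explicit and then decide its isomorphism type one prime at a time.

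First I would specialize Proposition~\ref{prop:2} to $n_1=1$. Since the first coordinate then lives in $\Z_1$, the automorphisms $f_{(s,t)}$ collapse to a single family $f_t$, $t\in\Z_n$, acting on $\Z_n$ by $f_t(j)=(1+rt)j+t$. Using the standing constraint $n\mid r^2$ (so that $r^2\equiv 0\pmod n$ and hence $(1+rt)(1+ru)=1+r(t+u)$), a short computation gives $f_t\circ f_u=f_{t\ast u}$, where
\[ t\ast u=t+u+rtu. \]
Thus $\aut{X}\cong(\Z_n,\ast)$, an abelian group of order $n$, and the whole question becomes: for which $n,r$ with $n\mid r^2$ is $(\Z_n,\ast)$ cyclic?

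Next I would record the power formula. By induction on $k$, again using $r^2\equiv 0\pmod n$,
\[ t^{\ast k}=kt+\tbinom{k}{2}rt^2 \pmod n, \]
so orders are easy to read off; in particular the order of $t=1$ is the least $k>0$ with $k+\binom{k}{2}r\equiv 0\pmod n$. I would then invoke the Chinese Remainder Theorem: writing $n=\prod_p p^{a_p}$, the ring isomorphism $\Z_n\cong\prod_p\Z_{p^{a_p}}$ transports $\ast$ to the analogous operation on each factor (with $r$ reduced modulo $p^{a_p}$), so $(\Z_n,\ast)\cong\prod_p(\Z_{p^{a_p}},\ast)$. As the factors have coprime orders, $(\Z_n,\ast)$ is cyclic if and only if every factor is.

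Then I would analyse a single factor $(\Z_{p^a},\ast)$, where $p^a\mid r^2$ forces $p\mid r$. Factoring $k+\binom{k}{2}r=k\bigl(1+\tfrac{k-1}{2}r\bigr)$ for odd $p$, respectively $k\bigl(1+(k-1)2^{c-1}u\bigr)$ for $p=2$ with $r=2^c u$ and $u$ odd, I would check that the second factor is a unit exactly when $p$ is odd, or when $p=2$ and $c=v_2(r)\ge 2$. In each of those cases the order of $1$ equals $p^a$, so $1$ generates and the factor is cyclic. The only possible failure is $p=2$ with $v_2(r)=1$.

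Finally I would settle that residual case. If $v_2(r)=1$ then $v_2(r^2)=2$, and $n\mid r^2$ forces $v_2(n)\le 2$; moreover the factor $(\Z_2,\ast)$ is trivially cyclic, so non-cyclicity can only arise when $v_2(n)=2$, i.e. the $2$-part is exactly $(\Z_4,\ast)$ with $r\equiv 2\pmod 4$. A direct check via the power formula shows every non-identity element of $(\Z_4,\ast)$ has order $2$, so this factor is $\Z_2\times\Z_2$. Collecting the cases, $\aut{X}$ is non-cyclic iff $v_2(n)=2$ and $r\equiv 2\pmod 4$; and since $r\equiv 2\pmod 4$ together with $n\mid r^2$ already forces $v_2(n)\le 2$, this is equivalent to the stated condition $n\equiv 0\pmod 4$ and $r\equiv 2\pmod 4$. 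I expect the prime $2$ to be the main obstacle: one must both carry out the $2$-adic valuation bookkeeping for the order of $1$ and, crucially, use $n\mid r^2$ to reconcile the sharp condition $v_2(n)=2$ with the looser $4\mid n$ in the statement, the point being that $8\mid n$ is incompatible with $r\equiv 2\pmod 4$; the odd primes are routine.
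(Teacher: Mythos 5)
Your proof is correct, but it takes a genuinely different route from the paper's. Both arguments start from the same raw material supplied by Proposition~\ref{prop:2} --- the explicit automorphisms $f_t(j)=(1+rt)j+t$ and the binomial power formula (your $t^{\ast k}=kt+\binom{k}{2}rt^2$ is exactly the paper's $f_k^j(i)=j(j-1)rk^2/2+jk(ir+1)+i$ evaluated at $i=0$) --- but from there the paper stays global and elementary: it hunts directly for an element of order $n$, splitting into cases according to the parities of $r$, $n$ and $k$, using $n\mid r^2$ to kill the terms involving $r^2$, and exhibiting an order-$n$ automorphism ($f_1$ via $f_1^r$, or $f^{r+n}$, or $f_k$ with $k$ odd) in every case except $4\mid n$ with $r\equiv 2\pmod 4$, where it shows $f_k^{n/2}=\mathrm{id}$ for all $k$, so no element of order $n$ exists. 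You instead first package the automorphisms into the circle-type group $(\Z_n,\ast)$ with $t\ast u=t+u+rtu$ (the adjoint group of the ring structure $t\cdot u=rtu$ on $\Z_n$, which is nilpotent precisely because $n\mid r^2$), localize by CRT at each prime, and compute the order of $1$ in each primary component via a unit-factor argument, isolating the obstruction at $p=2$ with $v_2(r)=1$ and reducing the exceptional case to the Klein four group $(\Z_4,\ast)$. The costs are comparable, but your route buys strictly more: it yields the full isomorphism type of $\aut{X}$ --- cyclic of order $n$ in all cases except the exceptional one, where $\aut{X}\cong\Z_2\times\Z_2\times\Z_{n/4}$ with $n/4$ odd --- which is precisely the computation the paper declares ``more complicated'' and declines to carry out after Proposition~\ref{prop:2}; the paper's route, in exchange, avoids any structural machinery and names explicit generators in the cyclic cases. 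Two cosmetic points in your write-up: for odd $p$ the factor $\tfrac{k-1}{2}$ should be read as $(k-1)\cdot 2^{-1}$ in $\Z_{p^a}$ (since $k-1$ need not be even), and the degenerate local case $r\equiv 0\pmod{2^a}$, where $\ast$ is plain addition, should be folded in as trivially cyclic; neither affects the correctness of the argument.
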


\begin{proof}
By Proposition~\ref{prop:2}, the group $\aut{X}$ is regular and abelian.
Let us investigate whether the automorphism group
is cyclic.

Let $f_k$, for $k\in\Z_n$, be an automorphism of $\mathcal{C}(1,n,r)$ with $f_k(0)=k$. In Proposition \ref{prop:2} we have seen that
 $f_k(i)=k+i\cdot(rk+1)$.
 We now prove, by an induction on~$j$, that
 $f^j_k(i)=j(j-1) rk^2/2+jk(ir+1)+i$. The claim is true
 for~$j=0$. Now
 \begin{multline*}
 f^j_k(i)=f_k(f_k^{j-1}(i))=f_k((j-1)(j-2)rk^2/2+(j-1)k(ir+1)+i)=\\
 k+\big((j-1)(j-2)rk^2/2+(j-1)k(ir+1)+i\big)(rk+1)=\\
 k+(j-1)rk^2+rki+(j-1)(j-2)rk^2/2+(j-1)k(ir+1)+i=j(j-1)rk^2/2+jk(ir+1)+i.
 \end{multline*}
 Thus
 $f^r_k(i)=(r-1)r^2k^2/2+rk+i$
 and, if $r$ is odd, we have
 $f^r_k(i)=rk+i$. In this case, by taking, for instance,
 $k=1$, we see that $f_1^r$ is an automorphism of
 order~$n/\mathrm{gcd}(n,r)$ and therefore
 $f_1$ is of order~$n$. The same situation happens
 if~$r$ is even and~$n$ is odd as we can take $f^{r+n}$ instead of~$f^r$.

A more complicated situation occurs when
 $r$ is even and~$n$ is even as well.
 If~$k$ is even, we have $f_k^{n/2}(i)=(n/2)k+i=i$.
 If~$k$ is odd then
 $f^{n/2}_k(i)=((n/2-1)rk/2+1)kn/2+i$.
 If~$r$ is divisible by~$4$ then we have $f^{n/2}_k(i)=n/2+i$ and $f_k$ is an automorphism of order~$n$. The only remaining situation is thus $n$ even, $k$ even and $r\equiv 2\pmod 4$.

 Suppose that $n$ is divisible by~$4$. Then $(n/2-1)$
 is odd and $((n/2-1)\cdot k\cdot r/2+1)$ is even.
 We have thus $f_k^{n/2}(i)=i$ and no automorphism is of order~$n$. On the other hand if $n\equiv 2\pmod 4$ then $(n/2-1)$ is even and $((n/2-1)\cdot k\cdot r/2+1)$ is odd. Therefore $f_k^{n/2}(i)=n/2+i$ and $f_k$ is an automorphism of order~$n$.
\end{proof}

\section{Infinite solutions}\label{ssec:4}
At the end we shall discuss some infinite solutions.
First of all we observe that the proofs of Theorem~\ref{thm:1} and Proposition~\ref{prop:2} go through even in the case $n_2=\infty$ and $r=0$.

\begin{theorem}\label{thm:8}
 Let $n_1\in \Z^+\cup\{\infty\}$. Then
 the set~$X=\Z_{n_1}\times \Z$ with
 $\sigma_{(a,i)}((b,j))=(b+i,j+1)$
 is an infinite indecomposable solution
 of multipermutational level at most~$2$
 with the multiplication group isomorphic to $\Z_{n_1}\times \Z$. Its automorphism group is a regular abelian group.
\end{theorem}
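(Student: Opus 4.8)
The plan is to verify that the formulas from Theorem~\ref{thm:1} remain meaningful and that all the key computations there carry through when we set $n_2=\infty$ and $r=0$. With $r=0$ the map simplifies to $\sigma_{(a,i)}((b,j))=(b+i,j+1)$, and the delicate well-definedness issues of Theorem~\ref{thm:1} (mixing $\Z_{n_1}$ and $\Z_{n_2}$, and the divisibility $n_2\mid n_1 r^2$) disappear entirely: the only coupling is the addition of $i\in\Z_{n_1}$ into the first coordinate, which lives in $\Z_{n_1}$, so nothing needs $\Z$ to be finite. Thus I would begin by observing that $\sigma_{(a,i)}$ is a bijection with inverse $\sigma_{(a,i)}^{-1}((b,j))=(b-i,j-1)$.

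Next I would check the cycle-set condition \eqref{eq:cc} directly, exactly as in the proof of Theorem~\ref{thm:1} but with $r=0$. The left-hand side computation collapses to a short expression that is manifestly symmetric under $a\leftrightarrow b$, $i\leftrightarrow j$; here the crucial point is that for $n_2=\infty$ we cannot invoke Rump's theorem that finite cycle-sets are automatically non-degenerate, so I must verify non-degeneracy by hand. This is easy: the map $T$ of \eqref{eq:ND} sends $(a,i)\mapsto \sigma_{(a,i)}^{-1}((a,i))=(a-i,i-1)$, which is visibly a bijection of $\Z_{n_1}\times\Z$. By Theorem~\ref{th:rclq} we then obtain a genuine solution $(X,\sigma,\tau)$. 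Commutativity of the $\sigma_{(a,i)}$ and the multipermutation-level-$2$ property (via Theorem~\ref{GI:mper}, checking that $\sigma_{\sigma_{(a,i)}((b,j))}$ is independent of $(a,i)$) follow by the same one-line specializations of the computations already done for $r=0$ in Theorem~\ref{thm:1}.

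For the permutation group and indecomposability, I would note that $\sigma_{(0,0)}$ acts as $(a,i)\mapsto(a,i+1)$, an element of infinite order, while $\sigma_{(0,1)}\sigma_{(0,0)}^{-1}$ sends $(a,i)\mapsto(a+1,i)$ and has order $n_1$; as in Theorem~\ref{thm:1} the composite $\sigma_{(0,0)}^{j}\bigl(\sigma_{(0,1)}\sigma_{(0,0)}^{-1}\bigr)^{b}$ sends $(a,i)$ to $(a+b,i+j)$, so the group acts transitively, hence regularly (being abelian), giving $\mathcal{G}(X)\cong\Z_{n_1}\times\Z$. Finally, for the automorphism-group claim I would rerun the argument of Proposition~\ref{prop:2} with $r=0$: setting $\delta_{(a,i)}=i$, every endomorphism has the form $f_{(s,t)}((a,i))=(s+a+i\,t,\,t+i)$ (the $r$-terms vanish), and the same computations show these maps are commuting homomorphisms with $f_{(s,t)}^{-1}=f_{(t^2-s,-t)}$, so $\aut{X}$ is a regular abelian group.

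The main obstacle is precisely the place where finiteness was used implicitly in Theorem~\ref{thm:1}: the appeal to Rump's result that finite cycle-sets are non-degenerate. In the infinite case this must be replaced by the explicit verification above that $T\colon(a,i)\mapsto(a-i,i-1)$ is bijective. I expect every other step to be a routine specialization $r=0$ of formulas already established, so the proof amounts to pointing out that those computations never relied on $n_2$ being finite together with this single direct non-degeneracy check.
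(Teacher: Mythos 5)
Your proposal is correct and follows essentially the same route as the paper: specialize the computations of Theorem~\ref{thm:1} and Proposition~\ref{prop:2} to $r=0$ and replace the only genuinely finiteness-dependent ingredient (Rump's automatic non-degeneracy of finite cycle sets) by the explicit check that $T((a,i))=(a-i,i-1)$ is a bijection, exactly as the paper does. The only cosmetic difference is that the paper also flags a third use of finiteness --- the step ``transitivity implies regularity'' for $\aut{X}$ --- and verifies directly that $f_{(0,0)}$ is the only automorphism with a fixed point, whereas you invoke the general fact that a transitive abelian permutation group is regular, which holds for infinite sets as well, so nothing is lost.
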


\begin{proof}
 Let us go through the proofs of Theorem~\ref{thm:1} and Proposition~\ref{prop:2}.
 Finiteness was assumed in three places only:
  the first one was the begining of the proof of Theorem~\ref{thm:1} where we checked that the expression
 is well defined. This is settled here by setting $r=0$.

 The second place was the check that the cycle set is non-degenerate. Every finite cycle set is non-degenerate, hence the finite case is granted for free. In the infinite case we have to check that the mapping $T:X\to X$ defined as
 \[ T((a,i))=\sigma^{-1}_{(a,i)}((a,i))=(a-i,i-1)\]
 is a bijection. It is easy to see that $T^{-1}((a,i))=(a+i+1,i+1)$.

 The third finiteness argument is implicit in the proof of Proposition~\ref{prop:2},
 by saying that transitivity implies regularity.
 But here $r=0$ and therefore $f_{(s,t)}((a,i))=(s+a+it,t+i)$ and we see that $f_{(0,0)}$ is the only automorphism with fixed points.
\end{proof}

Unfortunately, we do not know whether the construction of Theorem~\ref{thm:8}
is the only possible one since we do not have
an equivalent of Proposition~\ref{prop:5} in the infinite case. In order to obtain some, we should get past
the obstacle of Lemma~\ref{lm:4} not working in for infinite groups.

Nevertheless, if we assume that the permutation group is cyclic then such a solution is uniquely determined.

\begin{proposition}\label{prop:13}
 There exists a unique infinite indecomposable solution of multipermutational level at most 2 with a cyclic permutation group.
 Its group of automorphisms is equal to its permutation group.
\end{proposition}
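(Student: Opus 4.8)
The plan is to dispatch existence at once and concentrate on uniqueness and on $\aut{X}$. Existence is already supplied by Theorem~\ref{thm:8} with $n_1=1$: on $X=\Z_1\times\Z\cong\Z$ it produces $\sigma_x(y)=y+1$ for all $x$, an indecomposable solution of level at most~$2$ (in fact of level~$1$) whose permutation group is the infinite cyclic group of translations. So the real content is that this translation solution is the \emph{only} one. For uniqueness, let $(X,\sigma,\tau)$ be an arbitrary infinite indecomposable solution of level at most~$2$ with cyclic permutation group. First I would note that $\mathcal{G}(X)$ is cyclic and acts transitively on the infinite set $X$; since a transitive abelian action is regular, $\mathcal{G}(X)$ is infinite cyclic and the orbit map identifies $X$ with $\Z$ so that a chosen generator $t$ acts as $n\mapsto n+1$. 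Then each $\sigma_x$ is a power of $t$, giving a function $\epsilon\colon\Z\to\Z$ with $\sigma_x=t^{\epsilon(x)}$; set $e=0$ and $d=\epsilon(0)$. The whole problem reduces to showing that $\epsilon$ is \emph{constant}: once $\sigma_x=t^d$ for every $x$, the requirement $\langle t^d\rangle=\mathcal{G}(X)\cong\Z$ forces $d=\pm1$, and $(X,\sigma,\tau)$ is the solution of Theorem~\ref{thm:8}.

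To force constancy I would use two relations, both valid for infinite solutions. Proposition~\ref{prop:3}, which the text observes holds in the infinite case, yields through \eqref{eq:rho} the identity $\sigma_{\rho^i(e)}=L^i_{\rho(e)}\rho$ with $\rho=\sigma_0$; in coordinates this reads $\epsilon(di)=ci+d$, where $c=\epsilon(d)-d$, so $\epsilon$ is \emph{affine} along the orbit $d\Z$. On the other hand the $\rho^{-1}$-isotope $(X,L,\mathbf{R})$ is $2$-reductive (Theorem~\ref{th:2persol}), i.e.\ $L_{L_a(b)}=L_b$; translated into coordinates this says $\epsilon(b+\epsilon(a)-d)=\epsilon(b)$ for all $a,b$, so $\epsilon$ is \emph{periodic} with respect to the subgroup $H=\langle\epsilon(a)-d\colon a\in\Z\rangle=h\Z$. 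Because $d$ together with the generators of $H$ generate $\mathcal{G}(X)\cong\Z$, we get $\gcd(d,h)=1$.

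The crux, and the step I expect to be the main obstacle, is to reconcile affineness along $d\Z$ with $H$-periodicity. Suppose $h>0$. Then $\epsilon(di)$ is simultaneously the arithmetic progression $ci+d$ and a periodic function of $i$ (since $d(i+h)\equiv di\pmod h$), which forces $ch=0$ and hence $c=0$; thus $\epsilon\equiv d$ on $d\Z$, and $\gcd(d,h)=1$ together with $H$-periodicity spreads this value over all of $\Z$, giving $\epsilon\equiv d$ and $H=0$, contrary to $h>0$. Hence $h=0$ and $\epsilon$ is constant. This is exactly where the infinite case departs from Theorem~\ref{prop:5}: over $\Z_n$ the congruence $n_2\mid n_1r^2$ permits a nonzero ``slope'', whereas over $\Z$ a function that is both affine and periodic must be constant, since $\Z$ has no nonzero nilpotents — precisely the breakdown of Lemma~\ref{lm:4} flagged in the surrounding discussion, which is what makes the cyclic hypothesis do the work here.

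Finally, for the automorphism group I would argue directly on the translation solution, where every $\sigma_x$ is the shift $n\mapsto n+1$. A map $f$ is an endomorphism iff $f\sigma_x=\sigma_{f(x)}f$, which here collapses to $f(n+1)=f(n)+1$; its solutions are exactly the translations $n\mapsto n+f(0)$. Thus every endomorphism is an automorphism, and $\aut{X}$ is the full group of translations, which coincides with $\mathcal{G}(X)$ and is regular abelian, as predicted by Theorem~\ref{thm:8}.
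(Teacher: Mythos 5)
Your proposal is correct and is essentially the paper's own argument rendered in coordinates: both proofs combine the identity \eqref{eq:rho} from Proposition~\ref{prop:3} with $2$-reductivity of the $\sigma_e^{-1}$-isotope (Theorem~\ref{th:2persol}) and exploit torsion-freeness of the infinite cyclic group to kill the ``slope'' --- your cancellation $ch=0$ is the paper's $\gamma^{mr^2}=\mathrm{id}$ (forcing $m=0$ or $r=0$) written in exponent form --- thereby forcing all $\sigma_x$ to coincide, so the solution is the shift on $\Z$. The automorphism part is likewise the same centralizer computation, your relation $f(n+1)=f(n)+1$ being the coordinate version of the paper's observation that any automorphism commutes with $\rho$ and hence is a power of~$\rho$.
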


\begin{proof}
Let $(X,\sigma,\tau)$ be an infinite indecomposable solution of multipermutational level at most 2 with cyclic permutation group 
and let~$e\in X$.
According to Proposition~\ref{prop:3}~(1),
the infinite group is generated by $\sigma_e$ and $\sigma_{\sigma_e(e)}$, hence, according to
Proposition~\ref{prop:3}~(2)
the order of~$\sigma_e$ is infinite.
Denote $\rho=\sigma_e$ and let  $(X,L,\mathbf{R})$ be the $\rho^{-1}$-isotope of~$(X,\sigma,\tau)$.
Denote $\lambda=L_{\rho(e)}$ and we know that $\mathcal{G}(X)$ is generated by~$\rho$ and $\lambda$.

The permutation group of~$(X,\sigma,\tau)$ is cyclic
and therefore there exist exponents $l$ and $m$ such that $\lambda^l\rho^m=\gamma$ is a generator of the group.
Furthemore, there exists~$r\in\Z$ such that
 $\lambda=L_{\rho(e)}=\gamma^r$. Then, by $2$-reductivity of  $(X,L,\mathbf{R})$,
 \[ \mathrm{id}=L_e=L_{L_{\rho(e)}(e)}=L_{\gamma^r(e)}=L_{(\lambda^l\rho^m)^r(e)}=
 L_{\rho^{mr}(e)}=L^{mr}_{\rho(e)}=\gamma^{mr^2}\]
and therefore either $m=0$ or $r=0$.

Let now $x\in X$. Since $(X,\sigma,\tau)$ is indecomposable and $\mathcal{G}(X)$ is cyclic, there exists~$k\in \Z$, such that
$x=\gamma^k(e)$. Now
\[ L_x=L_{\gamma^k(e)}=L_{\lambda^{lk}\rho^{mk}(e)}=L_{L_{\rho(e)}^{lk}\rho^{mk}(e)}
=L_{\rho^{mk}(e)}
=L_{\rho(e)}^{mk}=\lambda^{mk}=\gamma^{rmk}=\mathrm{id}.\]
 
Hence $\sigma_x=L_x\rho=\rho$, for all $x\in  X$.
 Thus the solution is a~permutation solution.

 Now, let~$f$ be an automorphism of $(X,\sigma,\tau)$. We have
 \[f(\sigma_x(y))=f\rho(y) \quad \text{ and }\quad
  \sigma_{(f(x))} (f(y)) = \rho f(y)
 \]
and therefore $f$ is a power of~$\rho$
and $\aut{X}\subseteq \mathcal{G}(X)$. The other inclusion is evident.
\end{proof}

\end{document}